\theoremstyle{plain}
\newtheorem{theorem}{Theorem}[section]
\newtheorem*{nonumbertheorem}{Theorem}
\newtheorem{corollary}[theorem]{Corollary}
\newtheorem{lemma}[theorem]{Lemma}
\newtheorem{proposition}[theorem]{Proposition}
\theoremstyle{definition}
\newtheorem{definition}[theorem]{Definition}
\newtheorem*{nonumberquestion}{Question}
\newtheorem{notation}[theorem]{Notation} 
\newtheorem{remark}[theorem]{Remark}
\newtheorem{example}[theorem]{Example}
\newtheorem{maintheorem}{Theorem}
\newtheorem{maincorollary}[maintheorem]{Corollary}
\numberwithin{table}{section}
\newcommand{\Z}{\mathbb{Z}}
\newcommand{\R}{\mathbb{R}}
\newcommand{\s}{\mathbb{S}}
\newcommand{\fol}{\mathcal{F}}
\newcommand{\q}{Q_1}
\DeclareMathOperator{\im}{im}
\title{On the topology of leaves of singular Riemannian foliations}
\author{Marco Radeschi}
\address[M. Radeschi]{Department of Mathematics, University of Notre Dame, Notre Dame, IN 46556}
\email{mradesch@nd.edu}
\author{Elahe Khalili Samani}
\address[E. Khalili Samani]{Department of Mathematics, University of Notre Dame, Notre Dame, IN 46556}
\email{ekhalili@nd.edu}
\begin{document}

\begin{abstract}
In this paper, we establish a number of results about the topology of the leaves of a closed singular Riemannian foliation $(M,\fol)$. If $M$ is simply connected, we prove that the leaves are finitely covered by nilpotent spaces, and characterize the fundamental group of the generic leaves. If $M$ has virtually nilpotent fundamental group, we prove that the leaves have virtually nilpotent fundamental group as well.
\end{abstract}

\maketitle

\section{Introduction}

The study of isometric group actions on Riemannian manifolds has seen a number of important applications in Riemannian geometry.

Many of them fall under the umbrella of the so-called \emph{Grove's program}, whose goal is to study the properties of Riemannian manifolds with non-negative (or even almost non-negative) sectional curvature in the presence of symmetry. This program has been extremely fruitful both in producing new examples of manifolds with non-negative sectional curvature, and in proving important conjectures in the area when some symmetry is added (cf. \cite{KWW21}, \cite{GKS20}, \cite{FGT17}, \cite{GW14}, \cite{GZ00}, \cite{GVZ11}, \cite{Dea11}, etc.)

The concept of an isometric group action can be generalized by a \emph{singular Riemannian foliation}, 
which roughly speaking is the partition of a Riemannian manifold into smooth and equidistant submanifolds of possibly varying dimensions, called leaves (and the leaves can be thought as a generalization of the orbits of an isometric group action). 
It turns out that, while being more flexible than group actions (cf. for example \cite{Rad14}), singular Riemannian foliations
still retain a lot of the same structure of isometric group actions (cf. \cite{MR19}, \cite{GGR15}, \cite{GR15}, 
\cite{CM20}, \cite{Mor19}, etc.).

Given the action of a compact Lie group, the orbits are homogeneous spaces and thus have a very restricted topology, 
which can be employed to extrapolate topological properties of the ambient manifold (e.g. \cite{GZ12} and \cite{GYW19}). 
In \cite{GYW19}, the authors ask to what extent the leaves of a singular Riemannian foliation on a non-negatively curved space 
are also topologically restricted. In \cite{GGR15}, Galaz-Garcia and the first author proved that if $(M,\fol)$ is a closed singular Riemannian foliation on a compact, simply connected Riemannian manifold $M$, then the fundamental group of a generic leaf is a product $A\times K_2$ 
of an abelian group $A$ and a 2-step nilpotent 2-group $K_2$ - in particular, it is nilpotent. In the present paper, 
we continue exploring the topology of the leaves of singular Riemannian foliations $(M,\fol)$.

The first result states that if $M$ is simply connected, then a generic leaf $L_0$ of $\fol$ is a \emph{nilpotent space}, i.e. $\pi_1(L_0)$ acts nilpotently on $\pi_n(L_0)$ for all $n>1$:

\begin{maintheorem}\label{main-thm:leaves-nilpotent}
If $(M,\fol)$ is a closed singular Riemannian foliation on a compact, simply connected Riemannian manifold $M$,  
then the principal leaves of $\fol$ are nilpotent spaces. Furthermore, all leaves are finitely covered by a nilpotent space.
\end{maintheorem}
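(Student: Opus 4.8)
The plan is to exploit the fibration-like structure relating leaves of a singular Riemannian foliation to the ambient manifold. The key structural fact is that, for a closed singular Riemannian foliation $(M,\fol)$ on a compact manifold, a principal leaf $L_0$ sits inside $M$ in a way that can be analyzed via the holonomy of the foliation: restricting $\fol$ to a suitable saturated tubular neighborhood, or passing to the unit normal sphere bundle over a stratum, one obtains auxiliary closed singular Riemannian foliations of lower dimension together with fiber-bundle-type projections. First I would recall from the literature (e.g. \cite{MR19}, \cite{GGR15}) that the inclusion $L_0 \embedded M$ fits into a sequence in which $M$, being simply connected, forces strong constraints on $\pi_1(L_0)$; in particular I would reprove or quote that $\pi_1(L_0)$ is nilpotent — indeed of the form $A \times K_2$ by \cite{GGR15} — and, more importantly, extract that the action of $\pi_1(L_0)$ on the higher homotopy of $L_0$ is controlled.

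The main technical device I would set up is a \emph{Serre fibration up to finite cover}: the principal leaves are the fibers of the "quotient map" $M \to M/\fol$ over the principal stratum, and over the principal stratum this map is an honest fiber bundle with fiber $L_0$. Because $M$ is simply connected, the long exact homotopy sequence of $L_0 \to M_{\mathrm{prin}} \to (M/\fol)_{\mathrm{prin}}$, combined with the fact that $(M/\fol)_{\mathrm{prin}}$ is an orbifold-like space whose orbifold fundamental group surjects onto a quotient of $\pi_1$ of something simply connected, shows $\pi_1(M_{\mathrm{prin}})$ acts on $\pi_*(L_0)$ through a nilpotent (in fact finite, after controlling the abelian part) group. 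The cleanest route: show the inclusion $L_0 \embedded M_{\mathrm{prin}}$ induces a map under which $\pi_1(L_0)$ is "almost central," i.e. there is a finite-index subgroup acting trivially on each $\pi_n(L_0)$ — this is exactly nilpotence of the space after a finite cover, and for principal leaves one upgrades "finite cover" to "on the nose" using that $K_2$ acts trivially for dimension reasons while $A$ acts trivially because it arises from $\pi_2$ of the simply connected base.

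Concretely, the key steps in order: (1) reduce to the principal stratum, where $M_{\mathrm{prin}} \to B := M_{\mathrm{prin}}/\fol$ is a fiber bundle with fiber $L_0$; (2) analyze $\pi_1(B)$ and $\pi_1(M_{\mathrm{prin}})$ — show $\pi_1(M_{\mathrm{prin}})$ is a central (or finite, or nilpotent) extension controlled by $\pi_1(M)=1$ and the removed singular strata, which have codimension $\geq 2$ so $\pi_1(M_{\mathrm{prin}}) \cong \pi_1(M) = 1$ or a quotient thereof is still controlled; (3) from the homotopy exact sequence deduce $\pi_1(L_0)$ injects into $\pi_1(M_{\mathrm{prin}})$'s abelianization-adjacent piece and thus the monodromy action of $\pi_1(L_0)$ on $\pi_n(L_0)$ factors nilpotently; (4) invoke \cite{GGR15} for the precise $A\times K_2$ structure to kill the action on the nose for principal leaves; (5) for an arbitrary leaf $L$, pass to the finite cover corresponding to the (finite-index) image of $\pi_1(L_0) \to \pi_1(L)$ coming from holonomy, reducing $L$ to a nilpotent space after finite cover.

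The hard part will be step (2)–(3): making precise the sense in which removing the singular strata (of codimension $\geq 2$, hence not affecting $\pi_1$) interacts with the bundle projection, and showing the monodromy action is genuinely nilpotent rather than merely that $\pi_1(L_0)$ is a nilpotent group — these are logically independent, and one needs the geometry (equidistance, holonomy along the foliation, the infinitesimal foliation on the normal sphere) to pin down that the $\pi_1(M)$-action, being trivial, forces the $\pi_1(L_0)$-action on $\pi_n(L_0)$ to be unipotent. I expect the cleanest formulation routes through the statement that $L_0 \embedded M$ is a "nilpotent map" in the sense of Hilton–Mislin–Roitberg, applied fiberwise, but verifying the hypotheses in the singular Riemannian foliation setting — especially handling non-principal leaves via their holonomy covers — is where the real work lies.
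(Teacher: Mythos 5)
There is a genuine gap, and it sits exactly at the heart of the theorem. For principal leaves the whole content is to show that $\pi_1(L_0)$ acts trivially (or at least nilpotently) on $\pi_n(L_0)$, and your proposal never supplies a mechanism for the generators of the non-abelian part. The fibration $L_0\to M_0\to B$ that you invoke only handles the subgroup $H=\im(\partial)=\ker((\iota_0)_*)$: the action of $[\gamma]\in\pi_1(L_0)$ on $\pi_n(L_0)$ coincides with the action of $(\iota_0)_*([\gamma])\in\pi_1(M_0)$, so it dies precisely when $[\gamma]$ dies in $\pi_1(M_0)$. The remaining generators $k_i$ are loops around the codimension-two singular strata and do \emph{not} die there; indeed your step (2) is incorrect as stated, because removing codimension-\emph{two} strata does change the fundamental group, so $\pi_1(M_0)$ is not $\pi_1(M)=1$ (this is exactly why the $k_i$ exist and why \cite{GGR15} is nontrivial). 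The phrase ``$K_2$ acts trivially for dimension reasons'' is not an argument, and nilpotency of the group $\pi_1(L_0)$ does not imply nilpotency of its action on $\pi_n(L_0)$, as you yourself note. The paper's actual mechanism, which is the missing idea, is geometric: each $k_i$ is the fiber class of a circle bundle $\mathbf{p}_i=\pi_i\circ h_i^{-1}:L_0\to L_i'$ onto a nearby singular leaf $L_i'$ in a codimension-two stratum; the long exact homotopy sequence of $\s^1\to L_0\to L_i'$ shows $(\mathbf{p}_i)_*$ is injective on $\pi_n$ for $n\ge 2$, and since $k_i\in\ker((\mathbf{p}_i)_*)$ on $\pi_1$ and $(\mathbf{p}_i)_*$ is equivariant, $k_i$ acts trivially on $\pi_n(L_0)$. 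With this, the action of all of $\pi_1(L_0)$ is trivial on the nose, with no need for your ``finite-index subgroup, then upgrade'' maneuver.

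For non-principal leaves, your step (5) is in the right spirit (use the Slice Theorem fibration $\phi:L_0\to L$ with fiber $F$ having finitely many components, all principal leaves of the infinitesimal foliation) but incomplete: taking only the finite-index subgroup $G_2=\phi_*(\pi_1(L_0))\subseteq\pi_1(L)$ does not yet give a nilpotent space, because elements of $G_2$ may still act nontrivially on $\pi_*(F)$, and that monodromy obstructs controlling $\pi_n(L)$ through the exact sequence. The paper intersects $G_2$ with the finite-index subgroup $G_1=\ker\bigl(\pi_1(L)\to\pi_0(K)\bigr)$, which acts trivially on $\pi_*(F)$; then the long exact sequence of $F\to L_0\stackrel{\phi}{\to}L$ gives $\Gamma^2_G(\pi_n(L))\subseteq\ker\partial=\phi_*(\pi_n(L_0))$ for $G=G_1\cap G_2$, and triviality of the $\pi_1(L_0)$-action on $\pi_n(L_0)$ (the principal-leaf case) yields $\Gamma^3_G(\pi_n(L))=0$, so the cover of $L$ corresponding to $G$ is nilpotent. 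Without the $G_1$ ingredient and this two-step lower-central-series argument, your finite cover is not shown to be a nilpotent space.
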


This answers the first part of Problem 4.8 in \cite{GYW19}:
\begin{nonumberquestion}
Let $\fol$ be a closed singular Riemannian foliation on a closed (simply connected) Riemannian manifold M of almost non-negative curvature. Are the leaves of $\fol$ finitely covered by a nilpotent space, which moreover is rationally elliptic?
\end{nonumberquestion}

Our result does not in fact use the curvature assumption. On the rationally elliptic part of the question, we make the following remarks:
\begin{enumerate}
\item The very question of whether the leaves are rationally elliptic, only makes sense the moment we know that the leaves are (virtually) nilpotent spaces: these are in fact the spaces on which rational homotopy theory applies, and the rational dichotomy of rationally elliptic vs. rationally hyperbolic spaces holds.
\item Assuming the question above to be true, and applying it to the product foliation $(M\times \mathbb{S}^n,M\times \{pts.\})$ with $M$ simply connected and almost non-negatively curved, would imply that every simply connected, almost non-negatively curved Riemannian manifold is rationally elliptic, which is the statement of the celebrated (and out of reach) Bott-Halperin-Grove Conjecture. 
In particular, the rationally elliptic part of the question is so far out of reach.
\end{enumerate}

The second result analyzes more in detail the structure of the fundamental group of a generic leaf $L_0$ 
of a singular Riemannian foliation $(M,\fol)$ with $M$ simply connected:
 
\begin{maintheorem}\label{main-thm:non-abelian part}
Let $(M,\fol)$ be a closed singular Riemannian foliation on a compact, simply connected Riemannian manifold $M$. 
If $L_0$ is a principal leaf of $\fol$, then the non-abelian part $K_2$ of the fundamental group of $L_0$ is of the form 
$$K_2\cong (\prod_{j=1}^s \Z_{2^{a_j}}\times \Z_2^b\times \prod_{i=1}^k G_i)/({\Z_2^{c}\times\Z_4^{d}}),$$
where each $G_{i}$ is isomorphic to a central product of copies of $Q_8$, with possibly one copy of $D_8$ or $\Z_4$.
\end{maintheorem}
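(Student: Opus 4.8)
The plan is to reduce Theorem~\ref{main-thm:non-abelian part} to a group-theoretic classification, letting the geometry enter only to establish one structural property of $K_2$. Recall from \cite{GGR15} that $\pi_1(L_0)\cong A\times K_2$ with $A$ abelian and $K_2$ a $2$-step nilpotent $2$-group; the task is to identify $K_2$ up to isomorphism. It should be stressed that this is \emph{not} a formal consequence of $K_2$ being a $2$-step nilpotent $2$-group, even one with elementary abelian commutator subgroup: for instance the modular group $\Gamma=\langle a,b\mid a^{8}=b^{2}=1,\ bab^{-1}=a^{5}\rangle$ of order $16$ has $[\Gamma,\Gamma]\cong\Z_2$ and nilpotency class $2$, yet contains no subgroup isomorphic to $Q_8$ or $D_8$, so it is not of the displayed form. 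The geometry must therefore enter, through the construction of $K_2$ in \cite{GGR15}, where it arises from the holonomy of the principal leaf along the filtration of $(M,\fol)$ by the (closures of the) strata of its infinitesimal foliations. The point to extract from that construction is a presentation of $K_2$ in which the commutators are ``independent''. Concretely, I would first verify, within the \cite{GGR15} framework, that $[K_2,K_2]$ is elementary abelian---equivalently that $K_2/Z(K_2)$ has exponent $2$, so that taking commutators defines an alternating $\mathbb{F}_2$-bilinear form $c$ on $V:=K_2/Z(K_2)$ with values in $W:=[K_2,K_2]$, which is then automatically non-degenerate on $V$ and has image spanning $W$---and then, crucially, that $V$ decomposes $c$-orthogonally as $V=\bigoplus_{t}V_{t}$, where $c$ restricts on each $V_{t}$ to a non-degenerate alternating form valued in a single line $W_{t}\subseteq W$ (with $\sum_{t}W_{t}=W$), and that the squaring map $x\mapsto x^{2}$ behaves compatibly with this splitting.

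Granting this structural input, the rest is a (somewhat lengthy) group-theoretic normalization. For each $V_{t}$, a symplectic $\mathbb{F}_2$-space for $c|_{V_t}$, choose a symplectic basis; the subgroup of $K_2$ it generates is a central product of ``hyperbolic planes'', and each such plane lifts to a subgroup of $K_2$ which, after correcting its two generators by central elements so as to push their squares into the order-$\le 2$ subgroup generated by the relevant commutator, is isomorphic to $Q_8$ or $D_8$---the only obstruction being a generator whose square is forced to be a central element of order $4$ mapping into $W_{t}$, which amalgamates an extra central $\Z_4$ to that block. Normalizing via $Q_8*Q_8\cong D_8*D_8$ (to reduce the number of $D_8$ summands in a block modulo $2$) and $D_8*\Z_4\cong Q_8*\Z_4$ (to eliminate a $D_8$ once a $\Z_4$ is present), and discarding into the abelian factor any stray direct $\Z_2$ summands produced along the way, each block yields a subgroup $G_i$ that is a central product of copies of $Q_8$ together with at most one $D_8$ or one $\Z_4$. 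The part of $Z(K_2)$ not absorbed into the $G_i$ contributes, through its invariant-factor decomposition, the remaining abelian factor $\prod_{j=1}^{s}\Z_{2^{a_j}}\times\Z_2^{b}$.

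This exhibits $K_2$ as the internal central product of an abelian subgroup with the $G_i$; writing an internal central product as a quotient of the corresponding external direct product gives $K_2\cong P/N$, where $P:=\prod_{j=1}^{s}\Z_{2^{a_j}}\times\Z_2^{b}\times\prod_{i=1}^{k}G_i$ and $N=\ker(P\twoheadrightarrow K_2)$ is contained in $Z(P)=\prod_{j}\Z_{2^{a_j}}\times\Z_2^{b}\times\prod_i Z(G_i)$. Since each $Z(G_i)$ is $\Z_2$ or $\Z_4$, and since the cyclic factors $\Z_{2^{a_j}}$ can be chosen to map into $K_2$ with full order, a short check shows that every element of $N$ has order dividing $4$; reading off the invariant factors of $N$ then yields $N\cong\Z_2^{c}\times\Z_4^{d}$, which is the asserted presentation of $K_2$.

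The main obstacle, I expect, is the structural input of the first paragraph---extracting from \cite{GGR15} both that $[K_2,K_2]$ is elementary abelian and, more substantially, that $c$ splits into mutually orthogonal non-degenerate blocks (equivalently, that $K_2$ is a central quotient of a direct product of an abelian $2$-group with ``generalized extraspecial'' $2$-groups). The example $\Gamma$ above shows this is false for $2$-step nilpotent $2$-groups in general, so it has to be read off from the way the order-$2$ holonomy contributions attached to distinct infinitesimal strata interact; making this precise within the setup of \cite{GGR15} is the crux of the argument. The bookkeeping identifying $N$ exactly as $\Z_2^{c}\times\Z_4^{d}$, rather than merely as a group of exponent at most $4$, is a secondary and routine point.
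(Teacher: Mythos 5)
Your overall strategy is in fact the same as the paper's: split $K_2$ into a central abelian part together with pairwise-commuting ``blocks'' that are generalized extraspecial $2$-groups, classify the blocks, and then present $K_2$ as the quotient of the external direct product by a central subgroup of exponent at most $4$. The group-theoretic normalization in your second and third paragraphs (symplectic bases, $Q_8*Q_8\cong D_8*D_8$, $D_8*\Z_4\cong Q_8*\Z_4$, arranging the abelian factor to inject so that the kernel embeds into $\prod_i Z(G_i)$ and is therefore $\Z_2^{c}\times\Z_4^{d}$) is sound, and your remark that the statement is not a formal consequence of $K_2$ being a class-$2$ $2$-group---illustrated by the modular group of order $16$---is correct; for the theorem as stated your route via the alternating form and the standard classification of generalized extraspecial $2$-groups would suffice (the paper's quadratic-form/admissibility analysis, Theorem \ref{thm:admissible} and Corollary \ref{C:N_c}, is only needed for the finer restrictions on which blocks can actually occur).

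The genuine gap is exactly the step you defer: you assume, without proof, that the commutator form on $K_2/Z(K_2)$ decomposes into mutually orthogonal nondegenerate blocks, each valued in a single line of $[K_2,K_2]$ and compatible with squaring. This is the heart of the theorem, and it is not obtained from ``holonomy along a filtration by infinitesimal strata''; it follows from the explicit relations among the generators $k_i$ already recorded in Section \ref{SS:known-results} (quoted from \cite{GGR15}): each non-central $k_i$ has order $4$ with $k_i^2$ central, and any two non-commuting generators satisfy $k_ik_j=k_j^{-1}k_i$. From these relations, anticommuting generators have equal squares, since $(k_ik_j)^2=k_i(k_jk_ik_j^{-1})k_j^2=k_j^2$ and likewise $(k_ik_j)^2=k_i^2(k_i^{-1}k_jk_i)k_j=k_i^2$. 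Hence, on the graph $\Gamma$ whose vertices are the non-central generators and whose edges record anticommutation, all generators in a fixed connected component square to one common central involution $c$; the subgroups $N_c$ generated by the components commute with one another (absence of edges between components means the generators commute), and each $N_c$ satisfies $[N_c,N_c]=N_c^2=\langle c\rangle$, i.e.\ is generalized extraspecial (Notation \ref{N:N_c} and Lemma \ref{L:abt-N_c}). Without this short but essential argument your plan does not get off the ground; with it, the rest of your outline agrees with the paper's proof, where the kernel $Z'$ is generated by the central intersections $N_{c_i}\cap N_{c_j}$ and $Z_{(2)}\cap N_{c_j}$ and so has exponent at most $4$, giving $Z'\cong\Z_2^{c}\times\Z_4^{d}$.
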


The groups $G_i$ in the theorem are called \emph{generalized extraspecial}. These 2-groups already occur as fundamental groups of orbits of orthogonal representations and hence are impossible to avoid (e.g. $\mathrm{SO}(3)$ acting on $\mathbb{S}^4$), 
see also a family of examples from Section \ref{SS:examples}.

Finally, we extend Theorem A from \cite{GGR15} by showing that when $M$ has virtually nilpotent fundamental group, the leaves of any closed singular Riemannian foliation $(M,\fol)$ have virtually nilpotent fundamental group as well:

\begin{maintheorem}\label{main-thm:virtually nilpotent}
Suppose $(M,\fol)$ is a closed singular Riemannian foliation on compact Riemannian manifold $M$ with virtually nilpotent fundamental group. Then the leaves of $\fol$ have virtually nilpotent fundamental group as well.
\end{maintheorem}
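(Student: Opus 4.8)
The plan is to reduce the statement to the simply connected case, where it is contained in \thm{main-thm:leaves-nilpotent} (a space finitely covered by a nilpotent space has virtually nilpotent fundamental group), and then to propagate the conclusion across finite covers, across the stratification of $\fol$, and across the fibration relating a principal leaf to the leaf space of the regular stratum. Two reductions come first. Since the class of virtually nilpotent groups is closed under passage to subgroups and to finite extensions, and since pulling $\fol$ back along a finite cover $\hat M\to M$ produces a closed singular Riemannian foliation on the compact manifold $\hat M$ whose leaves are finite covers of the leaves of $\fol$, we may replace $\pi_1(M)$ by any finite-index subgroup; as a finitely generated virtually nilpotent group has a torsion-free nilpotent subgroup of finite index, we may assume $\Gamma:=\pi_1(M)$ is torsion-free nilpotent. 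Second, it suffices to treat a principal leaf $L_0$. For an arbitrary leaf $L$, the slice theorem for singular Riemannian foliations realizes a principal leaf $L_0$ lying in a distinguished tubular neighbourhood of $L$ as the total space of a fibre bundle $L_0\to L$ whose fibre is a principal leaf of the (closed) infinitesimal foliation of $\fol$ along $L$; being a compact manifold, this fibre has finitely many components, so the homotopy exact sequence of $L_0\to L$ shows that $\im\bigl(\pi_1(L_0)\to\pi_1(L)\bigr)$ has finite index in $\pi_1(L)$. Since this image is a quotient of $\pi_1(L_0)$, virtual nilpotence of $\pi_1(L_0)$ yields that of $\pi_1(L)$.

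For the principal leaf $L_0$ I would follow the strategy of \cite{GGR15}. The restriction of $\fol$ to the principal stratum $M^{\mathrm{pr}}$ is a regular Riemannian foliation with compact leaves and trivial holonomy, so the leaf-space map $p\colon M^{\mathrm{pr}}\to B^{\mathrm{pr}}:=M^{\mathrm{pr}}/\fol$ is a fibre bundle over a manifold with fibre $L_0$. Its homotopy exact sequence presents $\pi_1(L_0)$ as an extension $1\to A'\to\pi_1(L_0)\to H\to 1$ in which $A'$, a quotient of $\pi_2(B^{\mathrm{pr}})$, is abelian and central in $\pi_1(L_0)$, while $H=\ker\bigl(\pi_1(M^{\mathrm{pr}})\xrightarrow{p_*}\pi_1(B^{\mathrm{pr}})\bigr)=\im\bigl(\pi_1(L_0)\to\pi_1(M^{\mathrm{pr}})\bigr)$. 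A central extension of a virtually nilpotent group by an abelian group is virtually nilpotent, so it is enough to prove that $H$ is virtually nilpotent. The inclusion $M^{\mathrm{pr}}\hookrightarrow M$ induces $\pi_1(M^{\mathrm{pr}})\to\Gamma$, and its restriction to $H$ has image a subgroup $N\le\Gamma$ -- nilpotent, because $\Gamma$ is -- and kernel $H_0$ equal to the image in $\pi_1(M^{\mathrm{pr}})$ of $\pi_1(\tilde L_0)$, where $\tilde L_0$ is the leaf covering $L_0$ in the universal cover $(\tilde M,\tilde\fol)$; in particular $H_0$ is a quotient of $\pi_1(\tilde L_0)$. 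At this point one argues that the proof of \thm{main-thm:leaves-nilpotent}, equivalently of Theorem~A of \cite{GGR15}, applies to the closed singular Riemannian foliation $\tilde\fol$ on the complete, simply connected $\tilde M$: the compactness it genuinely uses is that of the leaves, together with local finiteness of the stratification, not that of the ambient manifold. This gives that $\pi_1(\tilde L_0)$ is virtually nilpotent -- in fact virtually abelian -- and hence so is $H_0$.

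The remaining point, which I expect to be the main obstacle, is that an extension $1\to H_0\to H\to N\to 1$ with $H_0$ virtually abelian and $N$ nilpotent need not be virtually nilpotent on its own: the obstruction is solvmanifold-type (exponential) distortion of $H_0$ inside $H$. Ruling this out should be geometric. The leaf $L_0$ is compact, and the covering $\tilde L_0\to L_0$ carries a cocompact, isometric action (by deck transformations in $N$) for the metric induced from $M$; this rigidity prevents $N$ from acting on the ``linear part'' of $H_0$ with exponential behaviour. Concretely, one expects that $N$ preserves the stable norm on $H_1(\tilde L_0;\R)$, and since a discrete group of linear automorphisms preserving a norm on a finite-dimensional real vector space is finite, $N$ acts through a finite group on a suitable finite-index subgroup of $H_0$ isomorphic to $\Z^d$; passing to finite-index subgroups then exhibits $\pi_1(L_0)$ as a central extension of a nilpotent group by a finitely generated abelian group, hence as virtually nilpotent. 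Together with the first paragraph this gives virtual nilpotence of $\pi_1(L)$ for every leaf $L$. Thus the two genuinely substantial steps are (i) transferring \thm{main-thm:leaves-nilpotent} to the non-compact cover $\tilde M$ and (ii) the rigidity argument closing the final extension; everything else is bookkeeping with the homotopy exact sequences and with the closure properties of the class of virtually nilpotent groups.
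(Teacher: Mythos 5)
Your two reductions (passing to a finite cover of $M$, and deducing the statement for non-principal leaves from the principal one via the slice-theorem fibration $L_0\to L$) coincide with the paper's, and the central-extension bookkeeping is fine. But the two steps you yourself single out as the substantial ones are genuine gaps, and the first is worse than you suggest. The lifted foliation $\tilde\fol$ on the universal cover $\tilde M$ is in general \emph{not} closed: the leaf $\tilde L_0$ covering $L_0$ is the cover associated to $\ker(\pi_1(L_0)\to\pi_1(M))$, so it is non-compact whenever the image of $\pi_1(L_0)$ in $\pi_1(M)$ is infinite --- which is the typical situation here (in the example opening Section \ref{S:nilpotent fundamental group}, $\tilde L_0\cong T^2\times\R$). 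So your remark that the proof of Theorem \ref{main-thm:leaves-nilpotent} ``only uses compactness of the leaves'' does not rescue the transfer to $\tilde M$: the leaves upstairs are not compact either, and the whole machinery of \cite{GGR15} (finitely many codimension-two strata, circle bundles onto compact singular leaves, the Molino fibration $L_0\to M_0\to B$) is set up for closed foliations. Extending it to $(\tilde M,\tilde\fol)$ is a substantial unproven claim, not a remark. The second step is likewise only an expectation: for non-compact $\tilde L_0$ the stable norm on $H_1(\tilde L_0;\R)$ may be degenerate and the space infinite-dimensional, you do not show that the conjugation action of $N$ on $H_0$ is realized by isometries preserving such a norm, and ``norm-preserving implies finite'' needs both finite-dimensionality and preservation of a lattice. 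Since you correctly note that an extension of a nilpotent group by a virtually abelian one need not be virtually nilpotent, this unresolved rigidity step is exactly where the theorem would have to be proved; as written, the proposal is a plan rather than a proof.

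For comparison, the paper never leaves $M$ and needs neither ingredient. After reducing to $\pi_1(M)$ nilpotent of some step $n$, a transversality/disk-filling argument (Lemma \ref{lemma:central}) shows that $(\pi_1(L_0))_{n+1}$ lies in the subgroup $\hat K$ generated by $\partial\pi_2(B)$ and the $\pi_1(B)$-translates of the fiber classes $k_i$ around the codimension-two strata; then the finite-index subgroup $G=h^{-1}(2\cdot H_1(L_0;\Z))$ centralizes $\hat K$, because every $g\in\pi_1(L_0)$ satisfies $g(c\cdot k_i)g^{-1}=(c\cdot k_i)^{\pm 1}$ while elements of $G$ are squares times products of commutators; hence $G_{n+2}\subseteq[G,\hat K]=1$ and $G$ is nilpotent of step at most $n+1$. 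The structural relation $gk_ig^{-1}=k_i^{\pm1}$ from \cite{GGR15}, which your argument never invokes, is precisely what replaces your rigidity step, and the lower central series of $\pi_1(M)$ replaces the passage to the universal cover.
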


In the fundamental paper \cite{KPT10}, the authors show that every Riemannian manifold with almost non-negative 
sectional curvature is finitely covered by a nilpotent space. With this in mind, Theorem \ref{main-thm:virtually nilpotent} gives the following steaightforward corollary:

\begin{maincorollary}
Given a closed singular Riemannian foliation $(M, \fol)$ on an almost non-negatively curved manifold $M$, the leaves have virtually nilpotent fundamental group.
\end{maincorollary}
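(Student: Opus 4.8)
The plan is to combine the structural description of the fundamental group of a principal leaf with elementary facts about nilpotent group extensions. First I would make two reductions. Since the class of virtually nilpotent groups is closed under quotients and under passing to and from finite-index subgroups, and since the pullback of $\fol$ to a finite cover $\widehat M\to M$ is again a closed singular Riemannian foliation whose leaves finitely cover the leaves of $\fol$, I may choose the cover so that $\pi_1(M)$ becomes nilpotent; I assume this from now on. Second, given an arbitrary leaf $L$, I pick a principal leaf $L_0$ meeting a saturated tubular neighbourhood $\mathrm{Tub}(L)$ of $L$; then $L_0\subseteq\mathrm{Tub}(L)$, and the normal-bundle projection restricts on $L_0$ to a fibre bundle $L_0\to L$ whose fibre $F$ (a union of leaves of the infinitesimal foliation along $L$) is compact, so $|\pi_0(F)|<\infty$. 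The homotopy exact sequence of $F\to L_0\to L$ shows that $\operatorname{im}\!\big(\pi_1(L_0)\to\pi_1(L)\big)$ — a quotient of $\pi_1(L_0)$ — has index $|\pi_0(F)|$ in $\pi_1(L)$; hence it suffices to treat principal leaves.

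So let $L_0$ be principal, let $p\colon\widetilde M\to M$ be the universal cover with its lifted foliation $\widetilde\fol$ (a closed singular Riemannian foliation, since components of preimages of the compact leaves of $\fol$ are closed in $\widetilde M$), and let $\widetilde L_0$ be a component of $p^{-1}(L_0)$; it is a principal leaf of $\widetilde\fol$, and $\widetilde L_0\to L_0$ is the regular covering with deck group $\Gamma_{L_0}:=\operatorname{im}\!\big(\pi_1(L_0)\to\pi_1(M)\big)$, corresponding to the normal subgroup $\kappa:=\pi_1(\widetilde L_0)=\ker\!\big(\pi_1(L_0)\to\pi_1(M)\big)$. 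Thus $1\to\kappa\to\pi_1(L_0)\to\Gamma_{L_0}\to1$ with $\Gamma_{L_0}\le\pi_1(M)$ nilpotent. The crucial input is that $\kappa$ is nilpotent and that the action of $\Gamma_{L_0}$ on $\widetilde L_0$ — the restriction of its isometric, foliation-preserving deck action on $\widetilde M$ — induces an outer action $\Gamma_{L_0}\to\operatorname{Out}(\kappa)$ with \emph{finite} image. I would obtain these by re-running the analysis behind Theorems~A and~B and behind \cite{GGR15} for the foliation $\widetilde\fol$: its infinitesimal foliations live on round spheres just as for $\fol$, and $\pi_1(M)$ acts cocompactly on $\widetilde M$, so $\kappa=\pi_1(\widetilde L_0)$ should again be of the form $A\times K_2$ with $A$ free abelian (arising from the transverse Seifert/Molino structure of $\widetilde\fol$) and $K_2$ a finite $2$-group; on $K_2$ the outer action is finite automatically, and on $A$ the monodromy preserves the flat transverse affine structure and so has finite image in $\mathrm{GL}(A)$.

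Granting this, the conclusion is pure group theory. Let $\Gamma_0=\ker\!\big(\Gamma_{L_0}\to\operatorname{Out}(\kappa)\big)$, of finite index in $\Gamma_{L_0}$ and therefore nilpotent, and let $G'\le\pi_1(L_0)$ be its preimage, of finite index. Every element of $G'$ acts on $\kappa$ by an inner automorphism of $\kappa$, so $G'=\kappa\cdot C_{G'}(\kappa)$; here $C_{G'}(\kappa)$ contains $Z(\kappa)$ as a central subgroup, with quotient $C_{G'}(\kappa)/Z(\kappa)\cong\kappa\,C_{G'}(\kappa)/\kappa\le G'/\kappa=\Gamma_0$ nilpotent, so $C_{G'}(\kappa)$ is central-by-nilpotent, hence nilpotent; and $\kappa$ is nilpotent. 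By Fitting's theorem the product $G'=\kappa\cdot C_{G'}(\kappa)$ of the two normal nilpotent subgroups $\kappa$ and $C_{G'}(\kappa)$ is nilpotent. Since $G'$ has finite index in $\pi_1(L_0)$, the group $\pi_1(L_0)$ is virtually nilpotent, which (by the first paragraph) proves the theorem.

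The two reductions and the final group-theoretic step are routine. The main obstacle is the structural step: checking that the description of $\pi_1$ of a principal leaf survives passage to the non-compact cover $\widetilde M$, and — above all — that the monodromy of the covering $\widetilde L_0\to L_0$ acts with finite image on the abelian \emph{vertical} part of $\pi_1(L_0)$, which is exactly the point that rules out pathologies such as an Anosov-type $\Z^2\rtimes\Z$ occurring as a leaf group.
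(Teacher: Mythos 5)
There is a genuine gap, and you in fact point at it yourself. Your argument rests on two structural claims about the lifted foliation $(\widetilde M,\widetilde{\fol})$ on the (non-compact) universal cover: that $\kappa=\pi_1(\widetilde L_0)$ is again nilpotent of the form $A\times K_2$, and that the deck group $\Gamma_{L_0}$ acts on $\kappa$ with finite image in $\operatorname{Out}(\kappa)$ (in particular with finite image in $\mathrm{GL}(A)$). Neither is proved: the structure theorem you invoke from \cite{GGR15} is established only for closed foliations on \emph{compact} simply connected manifolds, and re-running that analysis on $\widetilde M$ is not routine (the leaves of $\widetilde{\fol}$ and the codimension-two strata need not be compact or finite in number, and the Molino/Haefliger arguments behind the fibration $L_0\to M_0\to B$ are not available verbatim), while the finiteness of the monodromy on the free abelian part is asserted via an unexplained ``flat transverse affine structure'' and is exactly, as you say, ``the main obstacle'' --- i.e.\ the statement that would have to be proved. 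The closing Fitting-theorem step and the reduction to principal leaves via the slice-theorem fibration are fine, but they only work once these inputs are supplied. A smaller but real omission: your very first reduction (``choose the cover so that $\pi_1(M)$ becomes nilpotent'') already presupposes that an almost non-negatively curved manifold has virtually nilpotent fundamental group; this is precisely the external theorem of \cite{KPT10} (Kapovitch--Petrunin--Tuschmann) that must be cited, since nothing else in your argument uses the curvature hypothesis.

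For comparison, the paper's proof of this corollary is a one-liner: by \cite{KPT10}, $M$ is finitely covered by a nilpotent space, so $\pi_1(M)$ is virtually nilpotent, and then Theorem \ref{main-thm:virtually nilpotent} applies. The proof of Theorem \ref{main-thm:virtually nilpotent} itself never passes to the universal cover or re-derives the structure of $\pi_1$ of a leaf there: working on $M$ (after a finite cover making $\pi_1(M)$ nilpotent), it uses the fibration $L_0\to M_0\to B$, a transversality argument on disks to show $(\pi_1(L_0,p_0))_{n+1}\subseteq\hat K$ (Lemma \ref{lemma:central}), and the fact that every element of the finite-index subgroup $G=h^{-1}(2\cdot H_1(L_0;\Z))$ commutes with $\hat K$ because conjugation sends each generator $c\cdot k_i$ to $(c\cdot k_i)^{\pm1}$; non-principal leaves are then handled by Lemma \ref{L:other-leaves}. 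If you want to salvage your approach, you would need to either prove the two structural claims on $\widetilde M$ or, more economically, replace them by the paper's commutator argument, which sidesteps the question of the monodromy action on the abelian part altogether.
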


This paper is organized as follows. In Section \ref{S:preliminaries}, we collect some preliminaries about topological results 
for singular Riemannian foliations, and the main notation for bilinear and quadratic forms we need in the proof of Theorem \ref{main-thm:non-abelian part}. In Section \ref{S:topology of leaves}, we prove Theorem \ref{main-thm:leaves-nilpotent}. 
In Section \ref{S:fundamental group}, we prove Theorem \ref{main-thm:non-abelian part} and provide a family of examples showing that the generalized extraspecial groups can indeed appear in the fundamental group of principal orbits of orthogonal representations. Finally, in Section \ref{S:nilpotent fundamental group}, we prove Theorem \ref{main-thm:virtually nilpotent}.
 
\section{Preliminaries}\label{S:preliminaries}

\subsection{Singular Riemannian foliations}

Let $M$ be a Riemannian manifold. A singular Riemannian foliation on $M$ is a partition $\fol$
of $M$ into connected, injectively immersed submanifolds called leaves such that every geodesic that starts perpendicular 
to a leaf remains perpendicular to all the leaves it meets, and moreover, M admits a family of smooth vector fields 
that spans the leaves at all points.

A singular Riemannian foliation is called closed if all of its leaves are closed in $M$. 
Given a singular Riemannian foliation $(M,\fol)$ on a complete manifold $M$ we define the \emph{dimension 
of $\fol$}, denoted $\dim\fol$, as the maximal dimension of its leaves. 
The codimension of $\fol$ is defined by $\dim M-\dim\fol$.

A leaf $L$ of the foliation $\fol$ is called regular if its dimension is maximal, or equivalently, $\dim L=\dim\fol$.
The union of all regular leaves is an open, dense and connected submanifold, which is called the principal stratum of $M$ 
and is denoted by $M_0$. The union of all other leaves is called the singular stratum of $(M,\fol)$ 
and the connected components of the singular stratum are called singular strata. 

For a closed singular Riemannian foliation $(M,\fol)$, the canonical projection $\pi:M\to M/{\fol}$
induces a metric space structure on the leaf space $M/{\fol}$, where the metric 
is given by $d_{M/{\fol}}(\pi(p),\pi(q))=d_M(L_p,L_q)$. If in addition all the leaves of $\fol$ are regular, 
then the leaf space is a Riemannian orbifold. In particular, given a closed singular Riemannian foliation $(M,\fol)$, 
the quotient space ${M_0}/{\fol}$ is an orbifold.

We then call a leaf $L\subset M_0$ \emph{principal} if it projects to a manifold point of $M_0/\fol$. Clearly, the set of principal leaves is open and dense in $M_0$.

\subsection{Slice Theorem}\label{SS:Slice Theorem}
In this section we describe the structure of a singular Riemannian foliation around a leaf. For more details, we refer the interested reader to \cite{MR19}.

Let $(M, \fol)$ be a closed singular Riemannian foliation, let $p\in M$, and let $L_p$ denote the leaf through $p$. Define the \emph{horizontal space to $\fol$ at $p$}, $\nu_pL_p\subseteq T_pM$, as the subspace perpendicular to $T_pL_p$. Then there exists a singular Riemannian foliation $(\nu_pL_p,\fol_p)$ called the \emph{infinitesimal foliation of $\fol$ at $p$}, with two important properties:
\begin{enumerate}
\item $\fol_p$ is  invariant under rescalings,
\item In an $\epsilon$-neighbourhood $\nu_p^{\epsilon}L_p$ of the origin in $\nu_pL_p$, the exponential map $\exp_p:\nu_p^\epsilon L_p\to M$ takes the leaves of $\fol_p$ onto the connected components of the intersections $L\cap \exp \nu_p^\epsilon L_p$, with $L\in \fol$. 
\end{enumerate}
Furthermore, there is a group of isometries $K\subseteq O(\nu_pL_p)$, sending leaves of $L_p$ to (possibly different) leaves of $\fol_p$, with the property that for any $v\in \nu_p^\epsilon L_p$, the leaf $L_v\in \fol_p$ satisfies the following:
\[
\exp_p(K\cdot L_v)=L_{\exp_p(v)}\cap \exp_p\nu_p^\epsilon L_p
\]
In other words, two leaves of $\fol_p$ are in the same $K$-orbit if and only if they exponentiate to different connected components of an intersection $L \cap \exp_p\nu_p^\epsilon L_p$, for some $L\in \fol$.

In \cite{MR19}, the following Slice Theorem establishes a model for a singular Riemannian foliation around a leaf:
\begin{nonumbertheorem}[Foliated Slice Theorem]
Given a closed singular Riemannian foliation $(M, \fol)$ and a point $p\in M$, let $(\nu_pL_p, \fol_p)$ be the infinitesimal foliation of $\fol$ at $p$. Then there exists a compact Lie group $K\subset O(\nu_pL_p)$ and a principal $K$-bundle $P\to L_p$ such that the foliation $\fol$ in an $\epsilon$-neighbourhood of $L_p$ is foliated diffeomorphic to
\[
(P\times_K\nu_pL,P\times_K\fol_p)
\]
\end{nonumbertheorem}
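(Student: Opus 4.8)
The plan is to prove this by realizing a tubular neighbourhood of $L_p$ as an associated fibre bundle over $L_p$ whose fibre is the foliated normal slice, imitating the classical slice theorem for isometric actions. First I would fix $\epsilon>0$ below the focal radius of $L_p$, so that $\exp_p$ restricts to a diffeomorphism from the $\epsilon$-disk bundle $\nu^\epsilon L_p$ of the normal bundle onto an open tube $U\supseteq L_p$, and pull $\fol$ back to a partition $\widehat\fol$ of $\nu^\epsilon L_p$. The first thing to establish is that $\widehat\fol$ is invariant under the fibrewise rescalings $v\mapsto tv$, $t\in(0,1]$: this uses that $\fol$ is a singular Riemannian foliation, namely that geodesics leaving $L_p$ orthogonally stay orthogonal to every leaf, and that each distance tube around $L_p$ is saturated by leaves of an induced foliation. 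Granting this, $\widehat\fol$ is determined by its restriction to the unit normal sphere bundle, and on each single fibre $\nu^\epsilon_qL_p$ it coincides, by property (2) of the infinitesimal foliation at $q$, with the (rescaled) infinitesimal foliation $\fol_q$.

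Next I would check that the relevant structure group is a compact Lie group. The restriction of $\fol_p$ to the unit sphere of $\nu_pL_p$ is a closed singular Riemannian foliation on a compact manifold, so its group of foliated isometries is a compact Lie group; equivalently, the group $G\subseteq O(\nu_pL_p)$ of linear isometries carrying the partition $\fol_p$ to itself is a compact Lie group. The group $K$ from the discussion preceding the statement — characterized by the property that the $K$-orbits of leaves of $\fol_p$ are exactly the connected components of the intersections $L\cap\exp_p\nu_p^\epsilon L_p$, $L\in\fol$ — is a closed, hence compact, subgroup of $G$. (For the bare existence statement one could even take $K=G$; the point of the smaller $K$ is economy.)

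With these in hand, the reconstruction proceeds as follows. Parallel transport in $\nu L_p$ with respect to the normal connection, corrected by the foliation holonomy of $L_p$, identifies each foliated slice $(\nu_qL_p,\fol_q)$ with $(\nu_pL_p,\fol_p)$ up to an element of $K$; this exhibits the bundle of foliated slices over $L_p$ with structure group $K$. Let $P\to L_p$ be the associated principal $K$-bundle (a $K$-reduction of the normal orthonormal frame bundle). Then $\nu^\epsilon L_p\cong P\times_K\nu^\epsilon_pL_p$ over $L_p$; since $K$ preserves $\fol_p$, the partition $P\times_K\fol_p$ is well defined, and under this isomorphism followed by $\exp_p$ it matches $\widehat\fol$ fibre by fibre. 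It remains to see that the resulting partition of $U$ is $\fol|_U$ itself — i.e.\ that two points of $U$ lie in the same leaf of $\fol$ precisely when the corresponding points of $P\times_K\nu^\epsilon_pL_p$ do — which follows by combining the defining property of $K$ (leaves of $\fol_p$ are $K$-related exactly when they exponentiate into one and the same leaf of $\fol$) with the equidistance of the leaves of $\fol$. This yields the desired foliated diffeomorphism $(P\times_K\nu_pL_p,\,P\times_K\fol_p)\cong(U,\fol|_U)$.

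The main obstacle is the reconstruction step: one must show that the transition data describing how the infinitesimal foliations $\fol_q$ at nearby points $q\in L_p$ are identified can be organized into a cocycle valued in the \emph{finite-dimensional, compact} group $K$, rather than merely in the (infinite-dimensional) foliated diffeomorphism group of the slice, and that after this reduction the globally reassembled foliation $P\times_K\fol_p$ agrees with $\fol$ on the nose. This is exactly where the rigidity supplied by the linearization step is essential — the holonomy of a singular Riemannian foliation along a leaf acts by \emph{linear, foliated} isometries of the normal slice — together with a careful analysis, as carried out in \cite{MR19}, of how a leaf of $\fol$ that meets the tube breaks up into slice-leaves over the various points of $L_p$.
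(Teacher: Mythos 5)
First, a point of reference: the paper does not prove this statement at all --- it is quoted as the Foliated Slice Theorem of \cite{MR19} and used as a black box --- so there is no internal proof to compare your sketch against; the only question is whether your outline would stand on its own as a proof. It would not, because the step you flag as ``the main obstacle'' is in fact the entire content of the theorem, and your sketch either asserts it or defers it back to \cite{MR19}, which is circular. Concretely: a priori the holonomy of $\fol$ along $L_p$ acts on a normal slice only by \emph{germs of foliated diffeomorphisms}, which need be neither linear nor isometric, and metric parallel transport in $\nu L_p$ has no reason to carry $\fol_q$ to $\fol_p$ (even up to an error in $K$). Your reconstruction step rests on the claim that ``the holonomy of a singular Riemannian foliation along a leaf acts by linear, foliated isometries of the normal slice,'' but that is precisely the linearization theorem proved in \cite{MR19}, not an available tool. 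The actual argument there does not use the normal connection: it linearizes the flows of vector fields tangent to the leaves, proves that these linearized flows preserve the infinitesimal foliation, shows that the group they generate at $p$ has compact closure and can be conjugated into $O(\nu_pL_p)$ (this is where $K$ comes from), and only then assembles the transition data into a cocycle valued in $K$, producing the principal bundle $P$ and the foliated diffeomorphism of the tube with $P\times_K\nu_pL_p$. None of this is supplied or replaced by an alternative argument in your proposal.

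Two smaller issues. The parenthetical remark that ``for the bare existence statement one could even take $K=G$'' does not simplify anything: the difficulty is not the size of the structure group but showing that the transition functions can be taken to be \emph{linear foliated isometries} at all, i.e.\ constructing any cocycle valued in a compact subgroup of $O(\nu_pL_p)$ rather than in the foliated diffeomorphism group of the slice; enlarging the target group does not help with that. Second, the properties of $K$ you import from ``the discussion preceding the statement'' (that $K$-orbits of leaves of $\fol_p$ exponentiate onto connected components of $L\cap\exp_p\nu_p^\epsilon L_p$) are themselves consequences of the slice theorem and of the linearized holonomy construction in \cite{MR19}, so they cannot serve as inputs to a proof of it. In short, the high-level picture (tube as an associated bundle with foliated slice fibre, compactness of the foliated isometry group of $(\nu_pL_p,\fol_p)$) is right, but the proposal has a genuine gap exactly at the linearization and compactness-of-holonomy step, and the one concrete mechanism it offers in its place (normal parallel transport corrected by holonomy) would not work as stated.
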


It follows directly from the Slice Theorem that all principal leaves are diffeomorphic to each other, and for any leaf $L_p$, there is a locally trivial fiber bundle $L_0\to L_p$ from a principal leaf $L_0$, whose fiber is an orbit $K\cdot L_v$ for some principal point $v\in (\nu_pL_p, \fol_p)$, and it consists of a finite disjoint union of principal leaves of $\fol_p$.

\subsection {The Molino bundle}\label{SS:molino}

Let $(M,\fol)$ be a closed singular Riemannian foliation of codimension $q$ on a compact Riemannian manifold $M$. 
The principal $\mathrm{O}(q)$-bundle $\hat{M}\to M_0$, where $\hat{M}$ is the collection of orthonormal frames 
of ${TM_0}/{T\fol}$, is called the Molino bundle. The foliation $\fol$ lifts to a foliation 
$\hat{\fol}$ on $\hat{M}$ whose leaves are diffeomorphic to the leaves of $\fol$ 
on an open dense set. Moreover, the leaves of $\hat{\fol}$ are given by fibers of a submersion 
$\theta:\hat{M}\to W$, where $W$ is the frame bundle of the orbifold ${M_0}/{\fol}$. 

Consider the fibration $\hat{\theta}:{\hat{M}}_{\mathrm{O}(q)}\to W_{\mathrm{O}(q)}$ induced by $\theta$,
where ${\hat{M}}_{\mathrm{O}(q)}={\hat{M}}\times_{\mathrm{O}(q)}\mathrm{EO}(q)$ and $W_{\mathrm{O}(q)}=W\times_{\mathrm{O}(q)}\mathrm{EO}(q)$ denote the Borel constructions of $\hat{M}$ and $W$, respectively. 
Note that $\hat{\theta}:{\hat{M}}_{\mathrm{O}(q)}\to W_{\mathrm{O}(q)}$ and $\theta:\hat{M}\to W$ 
have the same fibers and hence the fiber of $\hat{\theta}$ is diffeomorphic to $L_0$, where $L_0$ is a principal leaf 
of $\fol$. In addition, ${\hat{M}}_{\mathrm{O}(q)}$ is homotopy equivalent to ${\hat{M}}/{\mathrm{O}(q)}=M_0$ 
and $W_{\mathrm{O}(q)}$ coincides with the Haefliger's classifying space $B$ of ${M_0}/{\fol}$. 
Therefore, we get  the following fibration (up to homotopy):
$$L_0\overset{\iota_0}{\rightarrow}M_0\overset{\hat{\theta}}{\rightarrow}B.$$

\subsection{Bilinear and quadratic forms over $\Z_2$}\label{SS:quadratic}

Let $V$ be a finite dimensional vector space over a field $F$. A quadratic form on $V$ is a map $Q:V\to F$ 
such that $Q(\lambda v)=\lambda^2 Q(v)$ for all $\lambda\in F$ and $v\in V$, and moreover, the map 
$B_Q:V\times V\to F$ defined by $B_Q(u,v)=Q(u+v)-Q(u)-Q(v)$ is a bilinear form. 
Given a basis $\{v_1,\ldots,v_{\ell}\}$ of $V$, it follows that
\begin{equation}\label{eq:quadratic form}
Q(x_1v_1+\ldots+x_{\ell}v_{\ell})=\sum_{i=1}^{\ell} Q(v_i)x_i^2+\sum_{1\leq i<j\leq\ell} B_Q(v_i,v_j)x_ix_j,
\end{equation}
Two quadratic forms $Q:V\to F$ and $Q':V\to F$ are called \emph{isometric} (or equivalent) 
if there exists an invertible linear map $f:V\to V$ such that $Q(v)=Q'(f(v))$ for all $v\in V$. 

Finally, given quadratic forms $Q:V\to F$ and $Q':V'\to F$, one defines the \emph{orthogonal sum} $Q\oplus Q':V\oplus V'\to F$ 
by $(Q\oplus Q')(v,v'):=Q(v)+Q'(v')$.

\section{The topology of leaves}\label{S:topology of leaves}

Let $(M,\fol)$ be a closed singular Riemannian foliation on a compact, simply connected Riemannian manifold $M$. 
The goal is to prove Theorem \ref{main-thm:leaves-nilpotent}, that the principal leaves are nilpotent manifolds.

We begin by collecting some of the results proved in \cite{GGR15} about the fundamental group of the principal leaves of $\fol$.

\subsection{Known results on the topology of leaves}\label{SS:known-results}

Since the fundamental group of $M$ does not change if we delete the strata of $\fol$ with codimension $>2$,
we can assume that we only have singular strata of codimension $\leq 2$. Furthermore, it is known that there are no strata 
of codimension one, which reduces $\fol$ to only having strata of codimension two. 

Let $\Sigma_1,\ldots,\Sigma_m$ denote the singular strata of $\fol$ of codimension two.
For $i=1,\ldots, m$, choose a singular leaf $L'_i$ in $\Sigma_i$, and let $L_i$ be a principal leaf at some distance $\epsilon_i$ 
from $L'_i$. For $\epsilon_i$ small enough, the foot-point projection $\pi_i:L_i\to L'_i$ is a circle bundle.
Fix a point $p_i\in L_i$, and let $[c_i]\in \pi_1(L_i,p_i)$ be the element represented by the fiber $c_i$ 
of $\pi_i$ through $p_i$.

Fixing a principal leaf $L_0$ and $p_0\in L_0$, we can choose, for each $i=1,\ldots, m$,  a diffeomorphism $h_i:L_i\to L_0$, 
and define $k_i=(h_i)_*([c_i])\in\pi_1(L_0,p_0)$. The group $K$ generated by the elements $k_i$ is then a normal subgroup 
of $\pi_1(L_0,p_0)$. Furthermore, there exists a homotopy fibration
\[
L_0\overset{\iota_0}{\rightarrow}M_0\overset{\hat{\theta}}{\rightarrow}B,
\]
as described in Section \ref{SS:molino}. One has the following (see the proof of Theorem A in \cite{GGR15}):
\begin{enumerate}
\item $\pi_1(L_0,p_0)$ is generated by the subgroup $K$ and the image of the boundary map 
$\partial:\pi_2(B,b_0)\to\pi_1(L_0,p_0)$.
\item $H:=\im(\partial)$ is central in $\pi_1(L_0,p_0)$
\item Any two non-commuting generators $k_i$ and $k_j$ of $K$ satisfy $k_ik_j=k_j^{-1}k_i$.
\item Let $N\subseteq K$ be the subgroup generated by the non-central $k_i$'s, and let $Z_{(2)}$ 
denote the Sylow $2$-subgroup of $Z(K)$. Then $\pi_1(L_0,p_0)$ is nilpotent, and equal to $A\times K_2$, 
where $A$ is abelian and $K_2=N\cdot Z_{(2)}$.
\end{enumerate}

\subsection{Proof of Theorem \ref{main-thm:leaves-nilpotent}}

As discussed in Section \ref{SS:known-results}, the principal leaves of $\fol$ have nilpotent fundamental groups. 
As a first step towards the proof of Theorem \ref{main-thm:leaves-nilpotent}, we prove that the principal leaves 
are nilpotent spaces:

\begin{proposition}\label{P:princ-leaves-nilp}
Suppose $(M,\fol)$ is a closed singular Riemannian foliation on a compact, simply connected Riemannian manifold $M$. 
Let $L_0$ denote a principal leaf of $\fol$ and let $p_0\in L_0$. Then $\pi_1(L_0,p_0)$ acts trivially on $\pi_n(L_0,p_0)$ 
for $n\geq 2$.
\end{proposition}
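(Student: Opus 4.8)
The plan is to leverage the description of a generating set of $\pi_1(L_0,p_0)$ from Section~\ref{SS:known-results}: this group is generated by the classes $k_1,\dots,k_m$ together with the subgroup $H=\im(\partial\colon\pi_2(B,b_0)\to\pi_1(L_0,p_0))$ arising from the fibration $L_0\to M_0\xrightarrow{\hat\theta}B$. For each fixed $n\ge 2$, the elements of $\pi_1(L_0,p_0)$ acting trivially on $\pi_n(L_0,p_0)$ form a subgroup, namely the kernel of the action homomorphism $\pi_1(L_0,p_0)\to\operatorname{Aut}(\pi_n(L_0,p_0))$. Hence it suffices to prove that each generator $k_i$ and each element of $H$ acts trivially on $\pi_n(L_0,p_0)$.

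The key tool will be the following elementary observation. If $Y$ is a connected space with basepoint $y_0$ and $G\colon S^1\times Y\to Y$ is continuous with $G(\ast,-)=\id_Y$ (for a fixed $\ast\in S^1$) and each $G(t,-)\colon Y\to Y$ a homotopy equivalence, then the loop $\omega(t)=G(t,y_0)$ represents an element of $\pi_1(Y,y_0)$ that acts trivially on $\pi_n(Y,y_0)$ for all $n$: given $f\colon(S^n,s_0)\to(Y,y_0)$, the composite $G\circ(\id_{S^1}\times f)\colon S^1\times S^n\to Y$ is a free homotopy realizing the action of $[\omega]$ on $[f]$ whose two endpoint maps both equal $f$, so $[\omega]\cdot[f]=[f]$.

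I will apply this twice. First, to the generators $k_i$: since $k_i=(h_i)_\ast[c_i]$ with $h_i$ a diffeomorphism, it is enough to show the fiber class $[c_i]$ of the circle bundle $\pi_i\colon L_i\to L_i'$ acts trivially on $\pi_n(L_i)$. If this bundle is orientable it is isomorphic to a principal $S^1$-bundle, and the observation applies with $G$ the $S^1$-action and $[c_i]$ the image of the standard generator of $\pi_1(S^1)$; if it is non-orientable, I pass to the orientation double cover $\widetilde{L_i}\to L_i$, which is the total space of a connected principal $S^1$-bundle over the connected double cover $\widetilde{L_i'}$ of $L_i'$, whose fiber class maps onto $[c_i]$ and which induces isomorphisms on $\pi_n$ for $n\ge 2$, reducing to the orientable case. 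Second, to $H$: given $\alpha=\partial(\beta)$ with $\beta\colon(S^2,s_0)\to(B,b_0)$, I pull the fibration $L_0\to M_0\to B$ back along $\beta$ to a fibration $L_0\to E'\to S^2$; by naturality of the connecting homomorphism, $\alpha$ lies in the image of $\partial'\colon\pi_2(S^2)\to\pi_1(L_0)$. Writing $E'$ via a clutching function $g\colon S^1\to\operatorname{hAut}(L_0)$ with $g(\ast)=\id$, a direct computation of $\partial'$ on a generator of $\pi_2(S^2)$ shows it is represented by the loop $t\mapsto g(t)(y_0)$, to which the observation applies with $G(t,y)=g(t)(y)$. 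Thus $\alpha$ acts trivially on $\pi_n(L_0)$. Since the $k_i$ and $H$ generate $\pi_1(L_0,p_0)$ and $n\ge 2$ was arbitrary, the proposition follows.

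The two applications of the observation and the reduction for the $k_i$ are routine; the only real friction is in the $H$ case, namely pinning down the identification of $\im(\partial\colon\pi_2(B)\to\pi_1(L_0))$ with loops of self-equivalences of the fiber evaluated at the basepoint. The pullback-over-$S^2$ and clutching-function argument above makes this precise; alternatively one could simply invoke the standard compatibility of the $\pi_1$-actions in the long exact sequence of a fibration, which gives that $\ker(\iota_{0\ast}\colon\pi_1(L_0)\to\pi_1(M_0))=\im(\partial)$ acts trivially on $\pi_\ast(L_0)$.
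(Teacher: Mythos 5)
Your proposal is correct, and it uses the same case split as the paper (treat the generators $k_i$ and the elements of $H=\im\partial$ separately, which suffices since the elements acting trivially on $\pi_n(L_0,p_0)$ form a subgroup), but the mechanism in each case is genuinely different. For the $k_i$, the paper needs no orientability or group-action structure: it applies naturality of the $\pi_1$-action to the circle-bundle projection ${\bf p}_i=\pi_i\circ h_i^{-1}:L_0\to L'_i$, notes that $k_i\in\ker(({\bf p}_i)_*)$ on $\pi_1$, and uses that $({\bf p}_i)_*$ is injective on $\pi_n$ for $n\ge 2$ (long exact sequence of the fibration, since $\pi_n(S^1)=0$); your route instead realizes $[c_i]$ as the evaluation loop of a principal $S^1$-action, at the price of the orientable/non-orientable dichotomy and the double-cover reduction -- correct, but more machinery than the statement requires. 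For $H$, the paper simply invokes the compatibility of the $\pi_1(L_0)$-action on $\pi_n(L_0)$ with the $\pi_1(M_0)$-action on the fiber in the fibration $L_0\to M_0\to B$ (\cite[Exercise 4.3.10]{Hat02}), so that $\ker((\iota_0)_*)=\im(\partial)$ acts trivially; this is precisely the ``alternative'' you mention at the end, whereas your primary route (pull back over $S^2$, clutching function $g$, identification of $\partial$ of the generator with the loop $t\mapsto g(t)(y_0)$) amounts to proving that compatibility by hand in the special case needed. What your approach buys is self-containedness: one elementary observation about loops $t\mapsto G(t,y_0)$ for a homotopy $G$ with $G(\ast,\cdot)=\id$ handles both cases without citing the fibration-action compatibility. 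What it costs is the extra bookkeeping (orientability of the circle bundles, basepoint and fiber-homotopy-equivalence care in the clutching description) that the paper's shorter argument sidesteps; those glossed details are routine and do not hide a gap.
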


\begin{proof}
Let $[\gamma]\in\pi_1(L_0,p_0)$ and $[\omega]\in\pi_n(L_0,p_0)$. The goal is to prove that $[\gamma]$
acts trivially on $[\omega]$. By the discussion in Section \ref{SS:known-results}, we may assume that either $[\gamma]\in H$ 
or $[\gamma]=k_i$ for some $i$. 
\par     
First, consider the case in which $[\gamma]=k_i$ for some $i$. Note that ${\bf p}_i:=\pi_i\circ h_i^{-1}:L_0\to L'_i$ 
is a circle bundle whose fiber is represented by $k_i$. This means that $k_i\in\ker(({\bf p}_i)_*)$, where $({\bf p}_i)_*$
is the induced map on $\pi_n$. Hence we have:
$$({\bf p}_i)_*([\gamma]\cdot[\omega])=({\bf p}_i)_*(k_i\cdot[\omega])=(({\bf p}_i)_*(k_i))\cdot(({\bf p}_i)_*([\omega]))=({\bf p}_i)_*([\omega]).$$
By the long exact sequence of homotopy groups associated to the fibration 
$\s^1\to L_0\overset{{\bf p}_i}{\rightarrow} L'_i$, it follows that the homomorphism 
$({\bf p}_i)_*$ is injective in $\pi_n$ for $n\geq 2$. 
This, together with $({\bf p}_i)_*([\gamma]\cdot[\omega])=({\bf p}_i)_*([\omega])$, implies that $[\gamma]$ 
acts trivially on $[\omega]$.
\par
Suppose now that $[\gamma]\in H=\im(\partial)$ and choose $[\beta]\in\pi_2(B,b_0)$ such that $[\gamma]=\partial([\beta])$.
Consider the fibration 
$$L_0\overset{\iota_0}{\rightarrow}M_0\overset{\hat{\theta}}{\rightarrow}B.$$
Note that the action of $\pi_1(L_0,p_0)$ on $\pi_n(L_0,p_0)$ satisfies $[\gamma]\cdot[\omega]=(\iota_0)_*([\gamma])\cdot[\omega]$ (see \cite[Exercise 4.3.10]{Hat02}). Therefore, 
$$[\gamma]\cdot[\omega]=(\iota_0)_*([\gamma])\cdot[\omega]=(\iota_0)_*(\partial([\beta]))\cdot[\omega]=e\cdot[\omega]=[\omega].$$
This completes the proof.
\end{proof}

Moving to the non-principal leaves, we first prove that every leaf has a virtually nilpotent fundamental group.

\begin{lemma}\label{L:other-leaves}
Suppose $(M,\fol)$ is a closed singular Riemannian foliation with principal leaf $L_0$. If $\pi_1(L_0)$ is virtually nilpotent, 
then so is the fundamental group $\pi_1(L)$ of every leaf $L$ of $\fol$.
\end{lemma}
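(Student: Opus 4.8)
The plan is to use the Slice Theorem from Section~\ref{SS:Slice Theorem}, which gives, for any leaf $L$ of $\fol$, a locally trivial fiber bundle $F\to L_0\to L$ where $L_0$ is a principal leaf and the fiber $F=K\cdot L_v$ is a finite disjoint union of principal leaves of the infinitesimal foliation $\fol_p$ at a principal point $v\in\nu_pL_p$. The strategy is then to extract information about $\pi_1(L)$ from the long exact sequence of homotopy groups of this fibration, using the hypothesis that $\pi_1(L_0)$ is virtually nilpotent together with the fact that $\pi_0(F)$ is finite.

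First I would set up the fibration $F\overset{j}{\to}L_0\overset{\pi}{\to}L$ from the Slice Theorem, fixing basepoints, and note that since $F$ has finitely many connected components, replacing $L$ by a finite cover (equivalently, passing to a finite-index subgroup of $\pi_1(L)$) we may assume $F$ is connected; since ``virtually nilpotent'' is insensitive to passing to finite-index subgroups and to finite extensions, it suffices to prove the conclusion for this modified bundle, where now $F$ is a single principal leaf of $\fol_p$. Next, the relevant piece of the long exact sequence is
\[
\pi_1(F)\overset{j_*}{\to}\pi_1(L_0)\overset{\pi_*}{\to}\pi_1(L)\to \pi_0(F)=\{*\},
\]
so $\pi_*$ is surjective and $\pi_1(L)\cong\pi_1(L_0)/\im(j_*)$. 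A quotient of a nilpotent group is nilpotent, so a quotient of a virtually nilpotent group is virtually nilpotent (if $N\le \pi_1(L_0)$ is nilpotent of finite index, then its image in the quotient is nilpotent of finite index). Hence $\pi_1(L)$ is virtually nilpotent, and unwinding the finite-cover reduction, the original $\pi_1(L)$ is virtually nilpotent as well.

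I expect the main subtlety to be the bookkeeping in the reduction to connected fiber: one must make sure that the finite cover of $L$ produced by passing to the preimage of a component of $F$ genuinely carries a principal-leaf bundle with connected fiber, which follows from functoriality of the Slice Theorem construction (restricting the principal $K$-bundle $P\to L_p$), and that ``virtually nilpotent'' transfers correctly both up and down finite-index inclusions and across the quotient. A cleaner alternative that avoids the covering argument entirely: from the exact sequence $\pi_1(F)\to\pi_1(L_0)\to\pi_1(L)\to\pi_0(F)\to\pi_0(L_0)$, one gets that the image of $\pi_1(L_0)$ in $\pi_1(L)$ has finite index (cokernel injecting into the finite set $\pi_0(F)$) and is a quotient of $\pi_1(L_0)$, hence virtually nilpotent; a group containing a virtually nilpotent subgroup of finite index is itself virtually nilpotent. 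Either way, the argument is short and the only real content is the input from the Slice Theorem.
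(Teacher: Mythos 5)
Your proposal is correct and essentially identical to the paper's proof: the paper also invokes the Foliated Slice Theorem to get the fibration $L_0\to L$ with fiber having finitely many components, and reads off from the exact sequence $\pi_1(L_0)\to\pi_1(L)\to\pi_0(F)$ that $\pi_1(L)$ contains a finite-index subgroup which is a quotient of $\pi_1(L_0)$, hence is virtually nilpotent. Your ``cleaner alternative'' is exactly the paper's argument, and it makes the detour through finite covers and connected fibers in your first variant unnecessary.
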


\begin{proof}
For any leaf $L$ of $\fol$, the foliated Slice Theorem (cf. Section \ref{SS:Slice Theorem}) implies that there is a fibration $L_0\to L$ whose fiber $F$ 
has finitely many connected components. From the long exact sequence in homotopy one then has
\[
\pi_1(L_0)\to \pi_1(L)\to \pi_0(F)
\]
from which it follows that $\pi_1(L)$ is a finite extension of a quotient of $\pi_1(L_0)$, therefore it is virtually nilpotent as well.
\end{proof}

\begin{proof}[Proof of Theorem \ref{main-thm:leaves-nilpotent}]
The statement about principal leaves has been proved in Proposition \ref{P:princ-leaves-nilp}, so we now have to only consider non-principal leaves.

Given a leaf $L$, choose $p\in L$. Recall that, by the Foliated Slice Theorem (cf. \ref{SS:Slice Theorem}), there is a locally trivial fibration $\phi:L_0\to L$ whose fiber $F$ has finitely many connected components, all diffeomorphic to a principal leaf of the infinitesimal foliation $(\nu_p L_p,\fol_p)$. Furthermore, the action $\pi_1(L)\to \operatorname{Diff}(F)$ induces an action $\pi_1(L)\to \operatorname{Aut}(\pi_*(F))$, which factors as $\pi_1(L)\stackrel{\psi}{\to} \pi_0(K)\to \operatorname{Aut}(\pi_*(F))$. In particular:
\begin{enumerate}
\item The subgroup $G_1:=\ker\psi \subseteq \pi_1(L)$ has finite index in $\pi_1(L)$ and it acts trivially on $\pi_*(F)$.
\item The fibration induces a map $\pi_1(L_0)\stackrel{\phi_*}{\to} \pi_1(L)\to \pi_0(F)$. Thus $G_2:=\phi_*(\pi_1(L_0))$ is a nilpotent subgroup of $\pi_1(L)$ with finite index.
\end{enumerate}
Consider $G:=G_1\cap G_2\subseteq \pi_1(L)$, which is by the points above a nilpotent subgroup with finite index. We will now show that $G$ acts nilpotently on each $\pi_n(L)$, i.e. the \emph{lower central series} $\Gamma^m_G(\pi_n(L))\subseteq\pi_n(L)$ defined iteratively by
\[
\Gamma_G^1(\pi_n(L))=\pi_n(L), \qquad \Gamma_G^{m+1}(\pi_n(L))=\{\gamma\cdot \alpha-\alpha\mid \gamma\in G,\,\alpha\in  \Gamma_G^{m}(\pi_n(L))\}
\]
eventually becomes trivial.

Consider the long exact sequence
\[
\cdots\to\pi_n(F)\to\pi_n(L_0)\stackrel{\phi_*}{\to}\pi_n(L)\stackrel{\partial}{\to}\pi_{n-1}(F)\to\cdots
\]
Let $\alpha\in \pi_n(L)$, and $\gamma=\phi_*(\gamma_0)\in G$, where $\gamma_0\in \pi_1(L_0)$. Recall that $\partial(\gamma\cdot \alpha)=\gamma\cdot \partial(\alpha)$, where the action on the left is $\pi_1(L)$ acting on $\pi_*(L)$, while on the right we have the $\pi_1(L)$-action on $\pi_*(F)$. Since $G\subseteq G_1$, we have
\[
\partial(\gamma\cdot\alpha)=\partial (\alpha)\quad\Rightarrow \quad \partial(\gamma\cdot \alpha-\alpha)=0
\]
and therefore
\[
\Gamma_G^2(\pi_n(L))\subseteq \ker (\partial)=\phi_*(\pi_n(L_0))=\phi_*(\Gamma_{\pi_1(L_0)}^1(\pi_n(L_0))).
\]
Finally, we notice that if $\alpha=\phi_*(\alpha_0)$ with $\alpha_0\in \pi_n(L_0)$ then
\[
\gamma\cdot \alpha=(\phi_*(\gamma_0))\cdot(\phi_*(\alpha_0))=\phi_*(\alpha_0)\Rightarrow \gamma\cdot \alpha-\alpha=\phi_*(\gamma_0\cdot \alpha_0-\alpha_0).
\]

By induction on $m$, one then has
\[
\Gamma_G^{m+1}(\pi_n(L))\subseteq \phi_*\big(\Gamma_{\pi_1(L_0)}^{m}(\pi_n(L_0))\big).
\]
Since by Proposition \ref{P:princ-leaves-nilp}, $\Gamma_{\pi_1(L_0)}^{2}(\pi_n(L_0))=0$, we have $\Gamma_G^{3}(\pi_n(L))=0$ which proves that $G$ acts nilpotently on $\pi_n(L)$, hence finishing the proof.
\end{proof}

\section{Fundamental groups of the principal leaves}\label{S:fundamental group}

This section consists of two parts. The first part is devoted to the proof of Theorem \ref{main-thm:non-abelian part}.
In the second part, we provide examples of singular Riemannian foliations whose principal leaves have fundamental groups 
of the form discussed in Theorem \ref{main-thm:non-abelian part}.

Suppose that $(M,\fol)$ is a closed singular Riemannian foliation on a compact, simply connected Riemannian manifold $M$. 
Fix a principal leaf $L_0$ of $\fol$ and $p_0\in L_0$. Let $N$ and $K_2$ be the subgroups of $\pi_1(L_0,p_0)$ 
discussed in Section \ref{SS:known-results}.

Consider the graph $\Gamma$ with vertices the generators of $N$ and an edge between $k_i$ and $k_j$ 
if and only if $k_ik_jk_i^{-1}=k_j^{-1}$. Note that for every generator $k_i$ of $N$, there exists another generator 
which does not commute with $k_i$. Therefore, $\Gamma$ does not contain any isolated vertices. 
Note moreover that for every connected component $\Gamma_i$ of $\Gamma$, all vertices of $\Gamma_i$ 
square to the same element $c_i$. In addition, by proof of Theorem A in \cite{GGR15}, for any generator $k_i$ of $N$,
we have $k_i^4=1$ and $k_i^2$ is central in $K$. Therefore, $c_i$ is a central element of $N$ of order two.
Altogether, we get that there is a map $C:\pi_0(\Gamma)\to Z(N)$ defined by $C(\Gamma_i)=c_i$.

\begin{notation}\label{N:N_c}
From now on, we fix an element $c$ of $Z(N)$ which is of the form $k_i^2$ for some generator $k_i$ of $N$.
Moreover, $N_c$ denotes the subgroup of $N$ that is generated by all the vertices in $\Gamma_c:=C^{-1}(c)$.
\end{notation}

Recall that given a group $G$, the \emph{Frattini subgroup} $\Phi(G)$ is the intersection of all the maximal subroups of $G$. Furthermore, we recall the following:

\begin{definition}\label{def:Generalized ES}
A $2$-group $G$ is called \emph{generalized extraspecial} if $\Phi(G)$ is central, and $\Phi(G)=[G,G]=\Z_2$.
\end{definition}

We prove two important properties of the groups $N_c$. 

\begin{lemma}\label{L:abt-N_c}
Let $\{N_c\}_{c\in \textrm{Im}(C)}$ be the collection of groups defined above. Then:
\begin{enumerate}
\item For $c\neq c'$, the groups $N_{c}$ and $N_{c'}$ commute.
\item Each $N_c$ is a generalized extraspecial $2$-group.
\end{enumerate}
\end{lemma}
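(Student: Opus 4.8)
The plan is to deduce everything from the structural facts in Section \ref{SS:known-results}: the generators $k_i$ of $N$ satisfy $k_i^4=1$, $k_i^2$ is central in $K$, and any two non-commuting generators $k_i,k_j$ satisfy $k_ik_jk_i^{-1}=k_j^{-1}$ (equivalently $(k_ik_j)^2 = k_i^2 k_j^2$ after a short manipulation, since $k_ik_jk_i^{-1}=k_j^{-1}$ gives $k_ik_jk_ik_j = k_i^2$, so $(k_ik_j)^2 = k_i^2$ when $k_j^2=k_i^2=c$). First I would record the commutator calculus: for any two generators $k_i,k_j$, the commutator $[k_i,k_j]$ is either trivial (if they commute) or equals $k_j^{-2}=c^{-1}$ where $c$ is the common square of the component containing both — in particular $[k_i,k_j]\in\{1,c\}\subseteq\langle c\rangle\cong\Z_2$ whenever $k_i,k_j$ lie in the same component $\Gamma_c$, and more generally all commutators among all the $k_i$ lie in the central subgroup generated by the $c$'s.

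For part (1), I would argue that if $c\neq c'$, a generator $k_i\in\Gamma_c$ and a generator $k_j\in\Gamma_{c'}$ must commute. Indeed, if they did not commute, the edge between them in $\Gamma$ would force them into the same connected component, hence $c=c'$, a contradiction. Since $N_c$ and $N_{c'}$ are generated by such $k_i$'s and $k_j$'s respectively, and each generator of one commutes with each generator of the other, the two subgroups commute.

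For part (2), fix $c$ and set $G:=N_c$. Since $G$ is generated by elements of order dividing $4$ whose squares all equal $c$ (a central involution), and all commutators of generators lie in $\langle c\rangle\cong\Z_2$, a standard argument shows $G$ is a $2$-group with $[G,G]\subseteq\langle c\rangle$; since $N_c$ is non-abelian (it contains a non-commuting pair $k_i,k_j$ by construction, each component of $\Gamma$ having an edge), $[G,G]=\langle c\rangle\cong\Z_2$, and $c$ is central in $G$. It remains to identify the Frattini subgroup: for a $p$-group, $\Phi(G)=G^p[G,G]$, so $\Phi(G)=G^2[G,G]$. Here $G^2$ is generated by the squares of elements of $G$; since every generator squares to $c$ and $[G,G]=\langle c\rangle$, one checks that the square of any product of generators lies in $\langle c\rangle$ (using centrality of $c$ and the relation $(k_ik_j)^2=k_i^2 k_j^2 [k_j,k_i]$, all factors of which lie in $\langle c\rangle$), so $G^2=\langle c\rangle$ and hence $\Phi(G)=\langle c\rangle=[G,G]=\Z_2$, which is central. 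This is exactly the definition of generalized extraspecial.

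\textbf{Main obstacle.} The delicate point is part (2): one must be careful that the square of an \emph{arbitrary} element of $N_c$ — not just of a generator — lies in $\langle c\rangle$, and that $[N_c,N_c]$ is exactly $\langle c\rangle$ rather than trivial. Both reduce to a disciplined use of the identity $(xy)^2 = x^2 y^2 [y,x]$ valid when $[x,y]$ is central, combined with the facts that all generator-squares equal $c$ and all generator-commutators lie in $\langle c\rangle$; an induction on word length then handles general elements. One should also make sure $N_c$ is genuinely non-abelian so that $[N_c,N_c]\neq 1$, which follows since every connected component $\Gamma_i$ of $\Gamma$ carries at least one edge (there are no isolated vertices), and all of $\Gamma_c$'s vertices lie in components mapping to $c$.
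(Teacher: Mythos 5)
Your proof is correct and follows essentially the same route as the paper: part (1) is the identical observation that no edge of $\Gamma$ joins a vertex of $\Gamma_c$ to one of $\Gamma_{c'}$, and part (2) amounts to showing $N_c^2=[N_c,N_c]=\langle c\rangle$ (using that the generators square to $c$, that their commutators lie in $\langle c\rangle$, and that $N_c$ is non-abelian) and then invoking $\Phi(G)=G^2\cdot[G,G]$ for $2$-groups, which is exactly what the paper does via the extension $1\to\langle c\rangle\to N_c\to N_c/\langle c\rangle\to 1$. The only blemish is the parenthetical claim ``$(k_ik_j)^2=k_i^2k_j^2$'', which is false as stated (the correct identity, which you use later, is $(k_ik_j)^2=k_i^2k_j^2[k_j,k_i]$, giving $(k_ik_j)^2=k_i^2=c$), but your main argument never relies on the erroneous form.
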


\begin{proof}
First, we prove Statement (1). Let $k_1,\ldots, k_{\ell}$ be the generators of $N_c$, and $k'_1,\ldots, k'_r$ be the generators
of $N_{c'}$. As vertices of $\Gamma$, there is no edge between any $k_i$ and any $k'_j$, which means that each $k_i$ commutes with any $k'_j$ in $K$. Hence the result follows.
\par
As for Statement (2), if $k_1,\ldots, k_{\ell}$ denote the generators of $N_c$, then $V={N_c}/{\langle c\rangle}$ 
is isomorphic to $\Z_2^{\ell}$ and is generated by $[k_1],\ldots,[k_{\ell}]$. 
It follows that $N_c$ fits into a short exact sequence
\[
1
\to \langle c\rangle \to N_c\to V\to 1
\]
and in particular one has that both $N_c^2:=\langle g^2\mid g\in N_c\rangle$ and the commutator subgroup $[N_c,N_c]$ 
coincide with $\langle c \rangle\simeq \Z_2$. Therefore, the same is true for the Frattini subgroup $\Phi(N_c)$
since for a $2$-group $G$, one has $\Phi(G)=G^2\cdot [G,G]$.
\end{proof}

Given generalized extraspecial groups $G_1$ and $G_2$, with Frattini subgroups generated by $c_1$ and $c_2$, respectively, define the \emph{central product} $G_1*G_2$ by $G_1*G_2:={(G_1\times G_2)}/\langle (c_1,c_2)\rangle$. 
This is again a generalized extraspecial group, since
\[\Phi(G_1*G_2)=\Phi(G_1)\times_{\Z_2}\Phi(G_2) \cong \Z_2.\]

The $*$ operation is furthermore associative, and thus it makes sense to define, for a generalized extraspecial group $G$, 
the central product powers
\[
(G)^{*m}:=\underbrace{G*G*\ldots*G}_\text{m\textrm{ times}}
\]

Generalized extraspecial 2-groups are, as the name suggests, a generalization of \emph{extraspecial $2$-groups}, 
that is 2-groups such that $\Phi(G)=Z(G)=[G,G]\cong \Z_2$. These groups have been thoroughly studied at least since the 60's \cite{Hal56}. They are extremely simple: an extraspecial group has the form $(Q_8)^{*m}$ or $(Q_8)^{*(m-1)}*D_8$ 
for some $m\geq 1$, where $Q_8$ is the quaternion group and $D_8$ is the dihedral group of order $8$
(cf. Theorem 2.2.11 of \cite{LGM05}). It then follows from Lemma 3.2 in \cite{Sta02} that

\begin{theorem}
A generalized extraspecial $2$-group is of the form $G\times \Z_2^n$, where $G$ is one of
\[
Q_8^{*m},\qquad Q_8^{*(m-1)}*D_8,\qquad Q_8^{*(m-1)}*\Z_4.
\]
\end{theorem}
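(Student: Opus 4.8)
The plan is to translate the group-theoretic data of a generalized extraspecial $2$-group $G$ into a quadratic form over $\Z_2$ and then read off the classification from the classical description of extraspecial $2$-groups; this is essentially the route of \cite[Lemma 3.2]{Sta02}, which I now sketch. Fix a generator $c$ of $\Phi(G)=[G,G]=\Z_2$; by hypothesis $c$ is central. Set $V:=G/\Phi(G)$, which is a vector space over $\Z_2$, and let $\pi\colon G\to V$ be the quotient map. Since for a $2$-group $\Phi(G)=G^2\cdot[G,G]=\langle c\rangle$, every square in $G$ lies in $\langle c\rangle$; one checks that $[x,y]=c^{\,b(\pi x,\pi y)}$ defines an alternating bilinear form $b$ on $V$, and (using that $c$ is central of order $2$) that $x^2=c^{\,q(\pi x)}$ defines a quadratic form $q\colon V\to\Z_2$ whose polar form $B_q$ equals $b$, in the sense of Section~\ref{SS:quadratic}. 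Moreover $Z(G)/\Phi(G)=\operatorname{rad}(b)=:R$, and since $b$ vanishes on $R$ the restriction $q|_R\colon R\to\Z_2$ is linear.

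Next I would pick a decomposition $V=V_0\oplus R$ with $b|_{V_0}$ nondegenerate; this is possible because $b$ descends to a nondegenerate alternating form on $V/R$, whence $\dim V_0$ is even, say $2m$ (and $m\geq 1$, since $[G,G]\neq 1$ forces $b\neq 0$). Put $G_0:=\pi^{-1}(V_0)$ and $G_1:=\pi^{-1}(R)$. Then $[G_0,G_1]=1$ because $b(V_0,R)=0$, while $G_0\cap G_1=\langle c\rangle$, and a comparison of orders gives $G=G_0G_1=G_0*G_1$, a central product amalgamating $\langle c\rangle$. Here $G_0$ is extraspecial, since $Z(G_0)=\Phi(G_0)=[G_0,G_0]=\langle c\rangle$ (nondegeneracy of $b|_{V_0}$ gives $Z(G_0)=\langle c\rangle$), so by \cite[Thm. 2.2.11]{LGM05} either $G_0\cong Q_8^{*m}$ or $G_0\cong Q_8^{*(m-1)}*D_8$. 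The group $G_1$ is abelian of exponent dividing $4$ with $G_1/\langle c\rangle\cong R$, and using linearity of $q|_R$ one sees that either $c$ is not a square in $G_1$, in which case $G_1\cong\langle c\rangle\times\Z_2^{n}$, or $c=x^2$ for a suitable $x\in G_1$ and, after replacing the remaining generators by elements of order $2$, $G_1\cong\langle x\rangle\times\Z_2^{n}\cong\Z_4\times\Z_2^{n}$ with $\langle x^2\rangle=\langle c\rangle$.

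It remains to unwind the central product $G=G_0*G_1$. If $c$ is not a square in $G_1$, the $\langle c\rangle$-factor is absorbed into $G_0$ and $G\cong G_0\times\Z_2^{n}$, which accounts for the first two normal forms. If instead $c=x^2$ in $G_1$, then $G\cong(G_0*\Z_4)\times\Z_2^{n}$, where $G_0*\Z_4$ amalgamates $c$ with the square of a generator of $\Z_4$. The point I expect to be the real obstacle is to recognize that this group is always of the listed form $Q_8^{*m}*\Z_4$ — that is, that the ``$D_8*\Z_4$'' possibility yields nothing new. This follows from the classical central-product identity $D_8*\Z_4\cong Q_8*\Z_4$ (the order-$16$ central product), which gives $Q_8^{*(m-1)}*D_8*\Z_4\cong Q_8^{*(m-1)}*(Q_8*\Z_4)=Q_8^{*m}*\Z_4$. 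After relabelling $m$, the only possibilities for $G$ are $Q_8^{*m}\times\Z_2^{n}$, $(Q_8^{*(m-1)}*D_8)\times\Z_2^{n}$, and $(Q_8^{*(m-1)}*\Z_4)\times\Z_2^{n}$, as claimed; everything beyond the cited central-product identity is the routine symplectic-space bookkeeping underlying \cite[Lemma 3.2]{Sta02}.
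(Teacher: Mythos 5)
Your proof is correct and follows essentially the same route as the paper, which simply invokes the classification of extraspecial $2$-groups (\cite[Theorem 2.2.11]{LGM05}) together with \cite[Lemma 3.2]{Sta02}: your radical/complement splitting $V=V_0\oplus R$, the resulting central product $G=G_0*G_1$ with $G_0$ extraspecial and $G_1$ abelian of exponent at most $4$, and the identity $D_8*\Z_4\cong Q_8*\Z_4$ are precisely the argument underlying those citations, just written out in full.
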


\subsection{The associated quadratic form}\label{SS:associated quadratic form}

Let $G$ be a generalized extraspecial $2$-group with $\Phi(G)=G^2=\langle c\rangle$,
and let $V:=G/{\langle c\rangle}$. It is easy to check that $V$ is a vector space over $\Z_2$. 

Define the function $Q_G:V\to\Z_2$ by $Q_G([g])=k$, where $g^2=c^k$.  Since $c$ is central in $G$ and has order two, 
for any $g\in G$, we have $(cg)^2=cgcg=c^2g^2=g^2$ and thus $Q_G([cg])=Q_G([g])$. 
Therefore, $Q:=Q_G$ is well-defined and in fact a quadratic form as defined in Section \ref{SS:quadratic}. 
Furthermore, the bilinear form $B_Q$ associated to $Q$ (cf. Section  \ref{SS:quadratic}) satisfies 
$$ghg^{-1}h^{-1}=c^{B_Q([g],[h])},~{\text{for}}~g,h\in G.$$ 
In order to see this, note that both $g^2$ and $h^2$ are central elements of $G$. Therefore,
$$c^{B_Q([g],[h])}=c^{Q([g]+[h])}c^{-Q([g])}c^{-Q([h])}=(gh)^2g^{-2}h^{-2}=ghg^{-1}h^{-1}.$$

%Conversely, every non-trivial quadratic form $Q:V\cong\Z_2^{\ell}\to\Z_2$ produces a generalized extraspecial group $G_Q$ with $G_Q/\Phi(G_Q)\simeq V$, as follows. 
%Choosing a basis $\{v_1,\ldots, v_{\ell}\}$ for $V$, let $G_Q$ be the group with the presentation
%%\todo{I just called it $G_Q$ right away}
%\begin{equation}
%\label{E:presentation}G_Q=\langle c, g_1,\ldots, g_{\ell}\mid g_i^2=c^{Q(v_i)}, c^2=1, cg_i=g_ic, g_ig_j=c^{B_{ij}}g_jg_i\rangle.
%\end{equation}
%where $B_{ij}=B_G(v_i,v_j)$. Since $Q$ is non-trivial, the subgroup $\Phi(G_Q)$ is isomorphic to $\langle c\rangle\cong\Z_2$ 
%which is central in $G_Q$. Hence $G_Q$ is generalized extraspecial.
%
%\todo{Moved to appendix}
%It is an easy, and probably standard fact, that:
%\begin{enumerate}
%\item The isomorphism class of $G_Q$ does not depend on the choice of the basis (cf. Lemma \ref{L:GQ-well-defined}).
%\item Given a vector space $V$ over $\Z_2$, the correspondence $Q\mapsto G_Q$ induces a bijection between non-trivial quadratic forms on $V$.
%and generalized extraspecial groups extending $V$ (cf. Proposition \ref{prop:1-1}).
%\item Given a quadratic form  $Q:V\to\Z_2$ splitting as a sum $Q=Q_1\oplus Q_2$, with  $Q_1$ and $Q_2$ are non-trivial, then $G_Q\simeq G_{Q_1}*G_{Q_2}$ (cf. Proposition \ref{prop:orthogonal sum group}).
%\end{enumerate}
%

The quadratic form of each generalized extraspecial group can be explicitly computed. For this, consider the quadratic forms:
\begin{alignat*}{3}
 & H_+: \Z_2^2\to \Z_2 && \qquad H_-:\Z_2^2\to\Z_2 && \qquad \q:\Z_2\to \Z_2\\
 & H_+(x,y)=xy && \qquad H_-(x,y)=x^2+y^2+xy && \qquad \q(x)=x^2.
\end{alignat*}
We have the following:

\begin{proposition}\label{prop:quad-forms}
Suppose that $G$ is a generalized extraspecial $2$-group and let $V:= G/\Phi(G)$. 
\begin{enumerate}
\item If $G=(Q_8)^{*m}$, then $V\simeq\Z_2^{2m}$ and
\[
Q_G=H_-^{\oplus m}=\begin{cases}
H_+^{\oplus m}& m~{\rm{even}}\\
H_-\oplus H_+^{\oplus (m-1)} & m~{\rm{odd}}
\end{cases}
\]
\item If $G=(Q_8)^{*(m-1)}*D_8$, then $V\simeq\Z_2^{2m}$ and
\[
Q_G=H_-^{\oplus (m-1)}\oplus H_+=\begin{cases}
H_+^{\oplus m} & m~{\rm{odd}}\\
H_-\oplus H_+^{\oplus (m-1)}& m~{\rm{even}}
\end{cases}
\]
\item If $G=(Q_8)^{m}*\Z_4$, then $V\simeq\Z_2^{2m+1}$ and
\[
Q_G=H_+^{\oplus m}\oplus \q=H_-^{\oplus m}\oplus \q\]
\item If $G=G'\times \Z_2^n$ with $G'$ as in the previous points, then $V\simeq V'\oplus \Z_2^n$ and $Q_G=Q_{G'}\oplus0^{\oplus n}$.
\end{enumerate}
\end{proposition}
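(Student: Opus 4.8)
The plan is to reduce everything to the three elementary ``building blocks'' $Q_8$, $D_8$, and $\Z_4$, to compute their associated quadratic forms directly from the group presentations, to check that the assignment $G\mapsto Q_G$ turns central products into orthogonal sums, and finally to rewrite the resulting forms using two classical isometry identities over $\Z_2$. I would begin with the base cases. For $Q_8$, with $\Phi(Q_8)=\langle-1\rangle$, the classes $[i],[j]$ form a basis of $V\cong\Z_2^2$ with $[k]=[i]+[j]$; since $i^2=j^2=k^2=-1$ one reads off $Q_{Q_8}([i])=Q_{Q_8}([j])=Q_{Q_8}([i]+[j])=1$, hence $B_{Q_{Q_8}}([i],[j])=1$, and by \eqref{eq:quadratic form} $Q_{Q_8}=x^2+xy+y^2=H_-$. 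For $D_8=\langle r,s\mid r^4=s^2=1,\ srs=r^{-1}\rangle$, with $\Phi(D_8)=\langle r^2\rangle$, one gets $Q_{D_8}([r])=1$ and $Q_{D_8}([s])=Q_{D_8}([r]+[s])=0$ (because $s^2=(rs)^2=1$), so $Q_{D_8}=x^2+xy$ in the basis $[r],[s]$, which is isometric to $H_+$ via $(x,y)\mapsto(x,x+y)$. For $\Z_4=\langle t\rangle$ one gets $Q_{\Z_4}=x^2=\q$ at once.

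The key structural step is the claim that for a central product $G=G_1*G_2=(G_1\times G_2)/\langle(c_1,c_2)\rangle$ one has $Q_G=Q_{G_1}\oplus Q_{G_2}$ under the natural isomorphism $V\cong V_1\oplus V_2$. Indeed, $\Phi(G)$ is the image of $\langle c_1\rangle\times\langle c_2\rangle$; in $G$ the images of $(c_1,1)$ and $(1,c_2)$ coincide with a single generator $c$ of $\Phi(G)\cong\Z_2$, which gives $G/\Phi(G)\cong(G_1/\langle c_1\rangle)\times(G_2/\langle c_2\rangle)=V_1\oplus V_2$; and for $g=(g_1,g_2)$ we have $g^2=(c_1^{Q_{G_1}([g_1])},c_2^{Q_{G_2}([g_2])})$, whose image in $G$ is $c^{Q_{G_1}([g_1])+Q_{G_2}([g_2])}$, so $Q_G([g_1],[g_2])=Q_{G_1}([g_1])+Q_{G_2}([g_2])$. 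Since $*$ is associative, induction yields $Q_{G_1*\cdots*G_k}=Q_{G_1}\oplus\cdots\oplus Q_{G_k}$, and combining this with the base cases gives $Q_{(Q_8)^{*m}}=H_-^{\oplus m}$, $Q_{(Q_8)^{*(m-1)}*D_8}=H_-^{\oplus(m-1)}\oplus H_+$, and $Q_{(Q_8)^{m}*\Z_4}=H_-^{\oplus m}\oplus\q$.

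To obtain the displayed alternative expressions I would use two facts about quadratic forms over $\Z_2$: (i) $H_-\oplus H_-\cong H_+\oplus H_+$, which follows from the additivity of the Arf invariant (equal to $0$ on $H_+$ and $1$ on $H_-$) together with the classification of nondegenerate forms by rank and Arf invariant; and (ii) $H_-\oplus\q\cong H_+\oplus\q$, which uses that on the one-dimensional radical the form $\q$ takes the value $1$, allowing one to absorb the difference between $H_+$ and $H_-$ by a short explicit change of basis. Applying (i) to $H_-^{\oplus m}$ gives both cases of (1); applying it to the summand $H_-^{\oplus(m-1)}$ gives both cases of (2), with the parity of $m$ reversed since $m-1$ now plays the role of $m$; and iterating (ii) gives $H_-^{\oplus m}\oplus\q\cong H_+^{\oplus m}\oplus\q$, which is (3). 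For (4), if $G=G'\times\Z_2^n$ then $\Phi(\Z_2^n)=1$, so $\Phi(G)=\Phi(G')=\langle c\rangle$ and $V\cong V'\oplus\Z_2^n$; each standard generator of the $\Z_2^n$ factor squares to $1=c^0$ and commutes with all of $G'$, so it contributes $0$ to $Q_G$ and pairs trivially with $V'$, whence $Q_G=Q_{G'}\oplus0^{\oplus n}$.

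I expect the routine parts — the base-case computations and the rewriting — to be unproblematic. The point that needs the most care is identity (ii): unlike (i), it has no analogue for nondegenerate forms and genuinely uses characteristic $2$. The other mild subtlety is the bookkeeping in the central-product step, namely checking that $\Phi(G_1*G_2)\cong\Z_2$ and that the distinguished involutions $c_1$ and $c_2$ get correctly identified under the quotient before one can write down $Q_G=Q_{G_1}\oplus Q_{G_2}$.
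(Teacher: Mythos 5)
Your proposal is correct and follows essentially the same route as the paper: compute $Q_{Q_8}=H_-$, $Q_{D_8}=H_+$, $Q_{\Z_4}=\q$ directly, show that central products correspond to orthogonal sums (and $\times\,\Z_2^n$ to trivial summands), and then rewrite via the standard isometries $H_-^{\oplus 2}\cong H_+^{\oplus 2}$ and $H_\pm\oplus\q\cong H_\mp\oplus\q$, which the paper records in its appendix. The only cosmetic difference is that you justify $H_-^{\oplus2}\cong H_+^{\oplus2}$ via the Arf invariant where the paper simply cites it as well known.
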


\begin{proof}
This proposition follows easily from the following straightforward facts:
\begin{enumerate}
\item For $G=Q_8$, $G/\Phi(G)\simeq \Z_2^2$ and $Q_G=H_-$.
\item For $G=D_8$, $G/\Phi(G)\simeq \Z_2^2$ and $Q_G=H_+$.
\item For $G=\Z_4$, $G/\Phi(G)\simeq \Z_2$ and $Q_G=\q$.
\item Given $G_1$ and $G_2$ with quotients $V_i=G_i/\Phi(G_i)$, one has
\[
(G_1*G_2)/\Phi(G_1*G_2)=V_1\oplus V_2\quad\textrm{and}\quad Q_{G_1*G_2}=Q_{G_1}\oplus Q_{G_2}.
\]
\item Given $G$ with quotient $V=G/\Phi(G)$, one has
\[
(G\times \Z_2^n)/\Phi(G\times \Z_2^n)\simeq V\oplus \Z_2^n\quad\textrm{and}\quad Q_{G\times \Z_2^n}=Q_G\oplus 0^{\oplus n}.
\qedhere\]
\end{enumerate}
\end{proof}

\begin{remark}\label{remark:N_c}
The group $N_c$ discussed above is generated by elements of order four, that is the $k_i$'s. 
Moreover, for each $k_i$, there exists $k_j$ such that $k_ik_jk_i^{-1}k_j^{-1}=c$.
This is reflected in the corresponding quadratic form $Q:V\to\Z_2=\{0,1\}$ as follows.
There exists a basis $\{v_1,\ldots,v_{\ell}\}$ of $V\cong\Z_2^\ell$ with the property that $Q(v_i)=1$ for all $i$, 
and for each $v_i$, there exists $v_j$ such that $B_Q(v_i,v_j)=1$. We call such quadratic forms \emph{admissible}. 
\end{remark}

The next step consists of understanding which of the quadratic forms in Proposition \ref{prop:quad-forms} is admissible. 
We start by reducing the problem to quadratic forms without trivial summands:
\begin{lemma}\label{lem:splitting}
Let $Q:V\to\Z_2$ be a quadratic form. If there exists a splitting $V=V_1\oplus V_2$ such that $Q$ splits as $Q=q\oplus 0^{\oplus n}$ with $Q|_{V_1}=q$ and $Q|_{V_2}=0^{\oplus n}$, then $Q$ is admissible if and only if $q$ is admissible.
\end{lemma}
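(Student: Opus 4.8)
The plan is to reduce everything to two structural features of the orthogonal splitting $Q = q\oplus 0^{\oplus n}$ along $V = V_1\oplus V_2$. Let $\pi\colon V\to V_1$ be the projection with kernel $V_2$. First I would record that both $Q$ and its associated bilinear form $B_Q$ are pulled back along $\pi$: writing $u = \pi(u)+\rho(u)$ with $\rho(u)\in V_2$, orthogonality of the splitting kills the $V_1$--$V_2$ cross terms and the $V_2$--$V_2$ term, while $Q|_{V_2}=0$, so
\[
Q(u)=q(\pi(u))\qquad\text{and}\qquad B_Q(u,u')=B_q(\pi(u),\pi(u'))\quad\text{for all }u,u'\in V.
\]
I would also use that $B_Q$ is alternating, $B_Q(u,u)=0$, since we work over $\Z_2$.

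For the implication ``$Q$ admissible $\Rightarrow$ $q$ admissible'', I would start from an admissible basis $\{u_1,\dots,u_d\}$ of $V$ and push it down to $V_1$. The vectors $\pi(u_1),\dots,\pi(u_d)$ span $V_1$, so some sub-collection $\{\pi(u_{a_1}),\dots,\pi(u_{a_\ell})\}$ is a basis of $V_1$, and $q(\pi(u_{a_i}))=Q(u_{a_i})=1$ for every $i$. For the partner condition, fix $i$ and pick $u_b$ with $B_Q(u_{a_i},u_b)=1$; then $B_q(\pi(u_{a_i}),\pi(u_b))=1$, and expanding $\pi(u_b)$ in the chosen basis and using bilinearity produces an index $j$ with $B_q(\pi(u_{a_i}),\pi(u_{a_j}))=1$. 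Since $B_q$ is alternating, $j\neq i$, so this basis witnesses that $q$ is admissible.

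For the converse, ``$q$ admissible $\Rightarrow$ $Q$ admissible'', I would take an admissible basis $\{v_1,\dots,v_\ell\}$ of $V_1$ and any basis $\{w_1,\dots,w_n\}$ of $V_2$. The naive union fails only because $Q(w_k)=0$, so I would correct the $V_2$-part by replacing $w_k$ with $\tilde w_k:=v_1+w_k$; then $\{v_1,\dots,v_\ell,\tilde w_1,\dots,\tilde w_n\}$ is again a basis of $V$, with $Q(v_i)=q(v_i)=1$ and $Q(\tilde w_k)=Q(v_1)+Q(w_k)+B_Q(v_1,w_k)=1$. Each $v_i$ keeps a $q$-partner inside $V_1$, and if $v_j$ is a $q$-partner of $v_1$ then $B_Q(\tilde w_k,v_j)=B_Q(v_1,v_j)=1$, so $v_j$ serves as a partner for every $\tilde w_k$. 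Hence $Q$ is admissible.

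The only subtle point is the correction step in the converse: the $V_2$-directions are forced to be $Q$-isotropic, and they must be nudged off by a fixed anisotropic vector $v_1\in V_1$, which requires $V_1\neq 0$. This holds in every situation where the lemma is applied (the forms $q$ that survive after stripping trivial summands --- such as $H_+^{\oplus m}$, $H_-\oplus H_+^{\oplus(m-1)}$, or $H_+^{\oplus m}\oplus\q$ --- live on nonzero spaces), so I would either add this as a standing hypothesis or simply note that when $V_1=0$ there is nothing to prove. Everything else is a routine verification using bilinearity of $B_Q$ together with the two displayed identities above.
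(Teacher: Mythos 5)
Your proof is correct and takes essentially the same route as the paper: the forward direction projects an admissible basis of $V$ to $V_1$ and extracts a sub-basis, and the converse adjoins the corrected vectors $v_1+w_k$ (the paper's $(v_1,w_j)$) to an admissible basis of $V_1$. Your additional care --- expanding the partner of $\pi(u_{a_i})$ in the chosen basis via bilinearity, and flagging the degenerate case $V_1=0$ --- only makes explicit points the paper leaves implicit.
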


\begin{proof}
Suppose that $Q$ is admissible and choose a basis
$$\{(v_1,w_1),\ldots, (v_{m+n},w_{m+n})\}$$
of $V_1\oplus V_2$ with the property that $Q(v_i,w_i)=1$, and for every $(v_i,w_i)$ there exists $(v_j,w_j)$
with $B_Q((v_i,w_i),(v_j,w_j))=1$. After possibly rearranging basis elements of $V_1\oplus V_2$, 
we may assume that $\{v_1,\ldots, v_m\}$ forms a basis for $V_1$. 
Since $Q(v_i,w_i)=q(v_i)$ and $B_Q((v_i,w_i),(v_j,w_j))=B_q(v_i,v_j)$, the basis $\{v_1,\ldots, v_m\}$ of $V_1$ 
is admissible for $q$. On the other hand, if $\{v_1,\ldots, v_m\}$ is admissible for $q$ and $\{w_1,\ldots, w_n\}$ 
is any basis of $V_2$, then 
$$\{(v_i,{\bf{0}})\mid  i=1,\ldots, m\}\cup\{(v_1,w_j)\mid j=1,\ldots, n\},$$
forms an admissible basis for $Q$.\end{proof}

We now apply Lemma \ref{lem:splitting} to classify the admissible quadratic forms.

\begin{theorem}\label{thm:admissible}
Any admissible quadratic form $Q:\Z_2^{\ell}\to\Z_2$ is isometric to one of the following, up to orthogonal sum 
with $0^{\oplus n}$:
\begin{equation}\label{eq:admissible}
H_+^{\oplus m}~(m\geq 2),\qquad H_-\oplus H_+^{\oplus m-1},\qquad H_+^{\oplus m}\oplus \q~(m\geq 2).
\end{equation}
\end{theorem}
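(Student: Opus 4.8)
The plan is to combine the classification of quadratic forms over $\Z_2$ with Lemma \ref{lem:splitting} in order to reduce to a short list of normal forms, and then to eliminate those that cannot be admissible by a direct inspection. I would first recall the classification of quadratic forms over $\Z_2$, via an easy induction: if $B_Q\equiv 0$ then $Q$ is linear, and otherwise one splits off a $B_Q$-nondegenerate plane, which is isometric to $H_+$ or to $H_-$ according to whether it contains one or three vectors $v$ with $Q(v)=1$; iterating, and noting that on $\mathrm{rad}(B_Q)$ the form $Q$ is linear, every quadratic form over $\Z_2$ is an orthogonal sum of copies of $H_+$, $H_-$, $\q$ and the trivial form. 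Using the isometries $H_-\oplus H_-\cong H_+\oplus H_+$ and $H_-\oplus\q\cong H_+\oplus\q$ from Proposition \ref{prop:quad-forms} to reduce the number of $H_-$ summands, and Lemma \ref{lem:splitting} to discard trivial summands, it follows that every quadratic form over $\Z_2$ is isometric, up to orthogonal sum with $0^{\oplus n}$, to exactly one of
\[
H_+^{\oplus m}\ (m\geq 0),\qquad H_-\oplus H_+^{\oplus m-1}\ (m\geq 1),\qquad H_+^{\oplus m}\oplus\q\ (m\geq 0),
\]
where $m=0$ yields the trivial form and $\q$ respectively.

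By Lemma \ref{lem:splitting}, an admissible form is isometric (up to $0^{\oplus n}$) to one of the forms above, so it suffices to show that $0$, $\q$, $H_+=H_+^{\oplus 1}$ and $H_+\oplus\q=H_+^{\oplus 1}\oplus\q$ are \emph{not} admissible, for then the surviving possibilities are precisely those in \eqref{eq:admissible}. I would dispose of these four cases by hand. An admissible basis has at least two elements, since a single basis vector $v_1$ satisfies $B_Q(v_1,v_1)=0$ and would be isolated; this already rules out $\q$ (only $\Z_2^1$), while the trivial form on $\Z_2^\ell$ has no vector with $Q(v)=1$. For $H_+$ on $\Z_2^2$ the set $\{v\mid H_+(v)=1\}=\{(1,1)\}$ contains no basis. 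The only case requiring a small computation is $H_+\oplus\q$ on $\Z_2^3$: here $B_Q$ has a one-dimensional radical spanned by a vector $z$ with $Q(z)=1$; listing the four vectors with $Q=1$, the unique linear dependence among them forces every basis contained in that set to include $z$, while $z$ is $B_Q$-orthogonal to everything and hence always isolated. This completes the proof.

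For completeness I would also check that every form in \eqref{eq:admissible} is in fact admissible: for $H_-^{\oplus m}$ (which equals $H_+^{\oplus m}$ when $m$ is even and $H_-\oplus H_+^{\oplus m-1}$ when $m$ is odd) take the standard basis $\{e_k,e_k'\}_{k=1}^m$, so that $Q(e_k)=Q(e_k')=1=B_Q(e_k,e_k')$; and for $H_-^{\oplus m}\oplus\q$ with $m\geq 2$ adjoin the vector $z+e_1+e_2$, with $e_1,e_2$ lying in two different summands, which squares to $1$ and is joined by an edge to $e_1'$. The one genuine obstacle in the argument is the $H_+\oplus\q$ computation: it is precisely what forces the bound $m\geq 2$, rather than $m\geq 1$, in the third family of \eqref{eq:admissible}; everything else is bookkeeping built on Lemma \ref{lem:splitting} and Proposition \ref{prop:quad-forms}.
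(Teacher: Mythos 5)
Your proof of the theorem as stated is correct, and its skeleton is the same as the paper's: reduce to the normal forms of Proposition \ref{prop:quadratic form} (which you re-derive by the standard splitting argument rather than cite; the isometries you attribute to Proposition \ref{prop:quad-forms} are the ones the paper records in the appendix), discard trivial summands via Lemma \ref{lem:splitting}, and rule out the small exceptional forms by direct inspection --- your treatment of $H_+$ is identical to the paper's, and your radical argument for $H_+\oplus \q$ is a repackaging of the paper's observation that the only vectors with $Q=1$ and nonzero $B_Q$-pairing are three linearly dependent ones. Where you genuinely diverge is in scope: you observe, correctly, that the statement is only an upper bound on the isometry types of admissible forms, so the explicit admissible bases that occupy most of the paper's Cases 1--3 are logically unnecessary; ruling out $0$, $\q$, $H_+$ and $H_+\oplus\q$ suffices, which makes your proof shorter. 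The trade-off is that the paper's constructions also show the list \eqref{eq:admissible} is sharp (every listed form \emph{is} admissible), and your ``for completeness'' substitute only partially recovers this: the standard basis of $H_-^{\oplus m}$ certifies $H_+^{\oplus m}$ only for $m$ even and $H_-\oplus H_+^{\oplus m-1}$ only for $m$ odd (the two classes at each even rank are distinguished by the Arf invariant), so $H_+^{\oplus m}$ with $m\geq 3$ odd and $H_-\oplus H_+^{\oplus m-1}$ with $m$ even remain unverified, whereas your vector $z+e_1+e_2$ adjoined to the standard basis does handle the whole family $H_-^{\oplus m}\oplus\q\cong H_+^{\oplus m}\oplus\q$, $m\geq 2$. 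This does not affect the theorem you were asked to prove, but if you want the sharpness claim you should either supply bases for the two missing cases or fall back on the paper's explicit constructions.
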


\begin{proof}
Since the quadratic forms over $\Z_2$ are classified (see Proposition \ref{prop:quadratic form}), 
we only need to check the admissibility condition. By Lemma \ref{lem:splitting}, we may assume that $Q$ does not split 
as $q\oplus 0^{\oplus m}$. We break the proof into cases.

\smallskip

{\textbf{Case 1:}} $Q=H_-\oplus H_+^{\oplus m-1}$, where $2m=\ell$. The quadratic form $Q$ is given by
$$Q(x,y,z_1,z_2,\ldots,z_{2m-2})=x^2+xy+y^2+z_1z_2+\ldots+z_{2m-3}z_{2m-2}.$$
Let $e_1,\ldots,e_{\ell}$ denote the standard basis elements of $\Z_2^{\ell}$ and consider the following basis:
\begin{align*}
 & v_1=e_1+e_2, \quad v_2=e_3+e_4,\quad\ldots\quad  v_{m}=e_{2m-1}+e_{2m},\\
 & v_{m+1}=e_1,\\
 & v_{m+2}=e_1+e_3, \quad v_{m+3}=e_1+e_5,\quad \ldots\quad v_{2m}=e_1+e_{2m-1}.
\end{align*}
Then $Q(v_i)=1$ for all $i$, and for every $v_i$, there exists $v_j$ such that $B_Q(v_i,v_j)=1$. Hence $Q$ is admissible. 

\smallskip

{\textbf{Case 2:}} $Q=H_+^{\oplus m}$, where $2m=\ell$. Note that the only element of $\Z_2^2$ 
that is mapped to $1$ by $H_+$ is $(1,1)$. Therefore, $H_+$ is not admissible. However, if $m\geq 2$, 
then the following basis of $\Z_2^{\ell}$ is admissible for $Q$:
\begin{align*}
  &v_1=e_1+e_2, \quad v_2=e_3+e_4\quad \ldots\quad v_{m}=e_{2m-1}+e_{2m},\\
 &v_{m+1}=e_1+e_{2m-1}+e_{2m},\\
 & v_{m+2}=e_1+e_2+e_4, \quad v_{m+3}=e_3+e_4+e_6,\ldots \\
 &v_{2m}=e_{2m-3}+e_{2m-2}+e_{2m}.
\end{align*}

\smallskip

{\textbf{Case 3:}} $Q=H_+^{\oplus m}\oplus \q$, where $m\geq 2$ and $2m+1=\ell$. 
Let $\{v_1,\ldots, v_{2m}\}$ denote the basis constructed for $H_+^{\oplus m}$ in Case 2, 
and let $v_{2m+1}=e_1+e_{2m+1}$. Then $\{v_1,\ldots, v_{2m+1}\}$ forms an admissible basis for $Q$.

For $Q=H_+\oplus \q$, the elements with non-zero quadratic form are $(1,1,0)$, $(1,0,1)$, $(0,1,1)$, $(0,0,1)$. 
Among these, the only vectors with non-zero bilinear form are the first three, which are linearly dependent 
and thus do not form a basis. Hence $H_+\oplus \q$ is not admissible.
\end{proof}

Recall that the group $N_c$ (cf. Notation \ref{N:N_c}) is a generalized extraspecial group with an admissible basis. 
From the previous theorem, we then get:

\begin{corollary}\label{C:N_c}
If $N_c$ is a generalized extraspecial group whose corresponding quadratic form is admissible,
then, up to a direct product with copies of $\Z_2$, the group $N_c$ is isomorphic to one of the following:
\begin{equation}\label{eq:N_c}
(Q_{8})^{*m_1},\qquad (Q_{8})^{*(m_1-1)}*D_8~(m_1\geq 2),\qquad (Q_{8})^{*m_1}*\Z_4~(m_1\geq 2).
\end{equation}
\end{corollary}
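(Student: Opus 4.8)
The plan is to deduce the statement directly from the structure theorem for generalized extraspecial $2$-groups established above, combined with the classification of admissible quadratic forms in Theorem \ref{thm:admissible}. By Remark \ref{remark:N_c} the quadratic form $Q_{N_c}$ on $V=N_c/\Phi(N_c)$ is admissible, so there is nothing to verify about the hypothesis; the content is to see which groups from the structure theorem can actually occur. By that theorem we may write $N_c\cong G'\times\Z_2^n$ with $G'$ equal to one of $(Q_8)^{*m}$, $(Q_8)^{*(m-1)}*D_8$, or $(Q_8)^{*(m-1)}*\Z_4$, and it remains to determine for which $m$ the form $Q_{G'}$ is admissible.

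First I would reduce the admissibility of $Q_{N_c}$ to that of $Q_{G'}$: by Proposition \ref{prop:quad-forms}(4) one has $Q_{N_c}=Q_{G'}\oplus0^{\oplus n}$, so Lemma \ref{lem:splitting} gives that $Q_{N_c}$ is admissible if and only if $Q_{G'}$ is. Then I would run through the three families using the explicit forms from Proposition \ref{prop:quad-forms}(1)--(3) and the list \eqref{eq:admissible}. For $G'=(Q_8)^{*m}$ we have $Q_{G'}=H_-^{\oplus m}$, which by Proposition \ref{prop:quad-forms} equals $H_-\oplus H_+^{\oplus m-1}$ for $m$ odd and $H_+^{\oplus m}$ for $m$ even; in either case Theorem \ref{thm:admissible} makes it admissible (for $m$ even one automatically has $m\geq 2$), so no restriction is needed. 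For $G'=(Q_8)^{*(m-1)}*D_8$ the value $m=1$ gives $G'=D_8$ with $Q_{G'}=H_+$, which is not admissible, while for $m\geq2$ the form $Q_{G'}=H_-^{\oplus(m-1)}\oplus H_+$ equals $H_+^{\oplus m}$ or $H_-\oplus H_+^{\oplus m-1}$ and is admissible; this forces $m\geq2$. For $G'=(Q_8)^{*(m-1)}*\Z_4$ the values $m-1=0$ and $m-1=1$ give $Q_{G'}=\q$ and $Q_{G'}=H_+\oplus\q$ respectively, neither of which is admissible (the second by the final paragraph of the proof of Theorem \ref{thm:admissible}), while for $m-1\geq2$ the form $Q_{G'}=H_+^{\oplus(m-1)}\oplus\q$ is admissible; this forces $m-1\geq2$. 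Renaming the surviving parameter as $m_1$ yields exactly the three possibilities in \eqref{eq:N_c} with the stated ranges.

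Since everything rests on results already in hand, I do not expect a genuine obstacle; the only delicate point is the parity bookkeeping between the two equivalent descriptions of $H_-^{\oplus m}$ in Proposition \ref{prop:quad-forms} and the treatment of small values of $m$, which is exactly what produces the constraints $m_1\geq2$ in the $D_8$ and $\Z_4$ cases. I would also emphasize that we never need the converse direction---that an admissible form pins down the group---because the structure theorem already supplies a finite list of candidate groups to test against Theorem \ref{thm:admissible}.
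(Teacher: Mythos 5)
Your proposal is correct and takes essentially the same route as the paper, whose proof is precisely the comparison of Proposition \ref{prop:quad-forms} with the classification of admissible forms in Theorem \ref{thm:admissible}; you merely spell out the case-by-case bookkeeping (the reduction via Lemma \ref{lem:splitting} and the exclusion of $D_8$, $\Z_4$, and $Q_8*\Z_4$) that the paper leaves implicit.
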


\begin{proof}
This follows trivially by comparing the quadratic forms in Proposition \ref{prop:quad-forms} with the classification of admissible quadratic forms in Theorem \ref{thm:admissible}.
\end{proof}

Finally, we prove Theorem \ref{main-thm:non-abelian part}. 

%\begin{theorem}
%A generalized extraspecial group $G$ is isomorphic to $G'\times \Z_2^{m_2}$, where $G'$ is one of the following.
%\begin{equation}
%(Q_{8})^{*m_1},\hspace{0.5cm}(Q_{8})^{*(m_1-1)}*D_8,\hspace{0.5cm}(Q_{8})^{*m_1}*\Z_4.
%\end{equation}
%\end{theorem}

%\begin{proof}
%By Lemma \ref{prop:1-1}, $G$ is isomorphic to $G_Q$ where $Q$ is one of the quadratic forms in Proposition \ref{prop:quadratic form}. By Lemma \ref{lem:splitting} it follows that $G=G_{Q'}\times \Z_2^{m_2}$ where $Q'$ is one of
%$$H_+^{\oplus m_1},\hspace{0.5cm}H_-\oplus H_+^{\oplus m_1-1}, \hspace{0.5cm}H_+^{\oplus m_1}\oplus Q_1.$$
%
%By Proposition \ref{prop:orthogonal sum group} we have that $G_{Q'}$ is isomorphic to one of:
%\[
%(G_{H_+})^{*m_1},\qquad G_{H_-}*(G_{H_+})^{*(m_1-1)},\qquad (G_{H_+})^{*m_1}*G_{Q_1}
%\]
%Thus it is enough to compute $G_{H_-}$, $G_{H_+}$, $G_{Q_1}$:
%\begin{align*}
%G_{H_-}=&\langle c, g_1, g_2\mid g_i^2=c, c^2=1, cg_i=g_ic, g_1g_2=cg_2g_1\rangle\cong Q_8\\
%G_{H_+}=&\langle c', h_1, h_2\mid h_i^2=c', (c')^2=1, c'h_i=h_ic', h_1h_2=c'h_2h_1\rangle\cong D_8\\
%G_{Q_1}=&\langle c'', h\mid h^2=c'', (c'')^2=1, c''h=hc''\rangle\cong \Z_4.
%\end{align*}
%Thus $G_{Q'}$ is isomorphic to one of
%\begin{align*}
%(D_8)^{*m_1}&\cong
%\begin{cases}
%(Q_{8})^{*m_1} & m_1~{\text{even}}\\
%(Q_{8})^{*(m_1-1)}*D_8  & m_1~{\text{odd}}
%\end{cases}\\
%Q_8*(D_{8})^{*(m_1-1)}&\cong
%\begin{cases}
%(Q_{8})^{*m_1} & m_1~{\text{odd}}\\
%(Q_{8})^{*(m_1-1)}*D_8  & m_1~{\text{even}}
%\end{cases}\\
%\Z_4* (D_{8})^{*m_1}&\cong \Z_4* (Q_{8})^{*m_1}
%\end{align*}
%\end{proof}

\begin{proof}[Proof of Theorem \ref{main-thm:non-abelian part}]
Fix $p_0\in L_0$. As discussed in Section \ref{SS:known-results}, the non-abelian part $K_2$ of $\pi_1(L_0,p_0)$ 
is a $2$-group of the form $K_2=N\cdot Z_{(2)}$, where $N$ is generated by the non-central generators of $K$
and $Z_{(2)}$ denotes the Sylow $2$-subgroup of $Z(K)$. Furthermore, by the discussion in Section \ref{S:fundamental group}, $N=N_{c_1}\cdot \ldots\cdot N_{c_k}$, where the elements $c_i\in Z(K)$ have order two.
By Corollary \ref{C:N_c}, each $N_{c_i}$ is of the form $G_i\times \Z_2^{a_i}$, where $G_i$ is one of the groups 
listed in Equation \eqref{eq:N_c}. Let $a=\sum_i a_i$. Finally, since all the groups $N_{c_i}$ commute 
with one another by Lemma \ref{L:abt-N_c}, one has $N_{c_i}\cap N_{c_j}\subseteq Z(N_{c_i})\cap Z(N_{c_j})$, 
and $Z(N_{c_i})\subseteq Z(K_2)$. Therefore
\[
K_2\cong (Z_{(2)}\times \prod_{i=1}^kN_{c_i})/Z'=(Z_{(2)}\times \Z_2^a\times \prod G_i)/Z',
\]
where $Z'\subseteq Z_{(2)}\times \prod_i Z(N_{c_i})$ is the subgroup of $K_2$ generated by the intersections $H_{ij}=N_{c_i}\cap N_{c_j}$ and $H_{0j}=Z_{(2)}\cap N_{c_j}$. Since the groups $H_{ij}$, $H_{0j}$ are all abelian 
and central, commute with one another, and have elements of order $2$ or $4$ (because $Z(N_{c_i})=\Z_2^{a_i}\times \Z_2$ 
or $\Z_2^{a_i}\times \Z_4$) it follows that $Z'=\Z_2^\alpha\times Z_4^\beta$ for some $\alpha$ and $\beta$.
\end{proof}

%We need to compute $K_2=N\cdot Z_{(2)}=N_{c_1}$. Note that $N$ and $Z_{(2)}$ commute, and the intersection $N\cap Z_{(2)}$
%is the subgroup $Z(N)$. Therefore,
%$$K_2=N\cdot Z_{(2)}\cong (N\times Z_{(2)})/{Z(N)}.$$
%By Proposition \ref{prop:N}, there is an isomorphism $\phi: (G_{1}\times\ldots\times G_{r})\times\Z_2^{\eta}\to N$ 
%for some $G_{i}$ as in Display \eqref{eq:N_c} and some $\eta\geq 0$. 
%Moreover, $Z(N)$ is isomorphic to $\Z_2^{\alpha}\times\Z_4^{r-\alpha}\times \Z_2^\eta$,
%where $\Z_2^{\alpha}\subseteq\langle c_1\rangle\times\ldots\times\langle c_r\rangle$. Hence we get the isomorphism
%$$K_2\cong{\left(\left(\prod_{i=1}^r G_{i}\right)\times\Z_2^{\eta}\times Z_{(2)}\right)}{\Big{/}}{(\Z_2^{\alpha}\times\Z_4^{r-\alpha}\times\Z_2^\eta)}.$$
%
%Notice that $\phi(\Z_2^\eta)\subseteq N\cap Z_{(2)}$, and thus it makes sense to define
%
%$$\rho:K_2\longrightarrow{\left(\left(\prod_{i=1}^r G_{i}\right)\times Z_{(2)}\right)}{\Big{/}}{(\Z_2^{\alpha}\times\Z_4^{r-\alpha})}$$
%by $\rho([g,h,z])=[g,\phi(h)z]$, where $g\in\prod_{i=1}^r G_{i}$, $h\in\Z_2^{\eta}$ and $z\in Z_{(2)}$.
%
%It can be easily checked that $\rho$ is an isomorphism. Since $Z_{(2)}\cong\prod_{j=1}^s\Z_2^{\beta_j}$,
%we conclude that $K_2$ is isomorphic to the central product of $\prod_{i=1}^r G_{i}$ and $\prod_{j=1}^s\Z_2^{\beta_j}$
%with respect to $\Z_2^{\alpha}\times\Z_4^{r-\alpha}$, as required.

\subsection{Examples of fundamental groups of principal leaves}\label{SS:examples}

The family of examples below shows that the non-abelian groups $G_i$ discussed in Theorem \ref{main-thm:non-abelian part} 
actually arise as fundamental groups of principal leaves of homogeneous singular Riemannain foliations.

Let $\{e_1,\ldots, e_n\}$ be the standard basis of $\R^n$. The Clifford algebra $Cl(0,n)$ on $\R^n$
is defined as the associative algebra generated by $e_1,\ldots, e_n$, where multiplication of the elements $e_i$ 
is given by:
$$e_i^2=-1,\hspace{0.3cm}e_ie_j=-e_je_i.$$
Consider the subset $E(n)=\{\pm e_{i_1}\ldots e_{i_{2k}}\}\subseteq Cl(0,n)$ containing products of even numbers of the $e_i$'s. This is easily seen to be a group under the product of $Cl(0,n)$. In \cite{CHM09}, Czarnecki, Howe, and McTavish prove that for the action of $G=\mathrm{SO}(n)\times\mathrm{SO}(n)$ 
on $M_{n\times n}(\R)$ defined by $(g,h)\cdot A=g^TAh$, the fundamental group of a principal orbit is of the form $E(n)\times\Z_2$. In this section, we investigate the structure of $E(n)$.

\begin{lemma}
Let $G_{0,n-1}$ be the group defined by generators $-1,e_1,\ldots,e_{n-1}$ and relations
$$(-1)^2=1,\qquad (e_i)^2=-1,\hspace{0.5cm}[e_i,e_j]=-1~(i\neq j),\qquad [e_i,-1]=1.$$
Then the groups $E(n)$ and $G_{0,n-1}$ are isomorphic.
\end{lemma}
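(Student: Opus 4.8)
The plan is to construct an explicit isomorphism $\Psi\colon G_{0,n-1}\to E(n)$ by sending the abstract generator $-1$ to the Clifford-algebra element $-1$ and each generator $e_i$ ($1\le i\le n-1$) to the product $e_ie_n\in E(n)$. First I would check that these images satisfy the defining relations of $G_{0,n-1}$: using $e_i^2=-1$ and $e_ie_j=-e_je_i$ in $Cl(0,n)$, one computes $(e_ie_n)^2 = e_ie_ne_ie_n = -e_i^2e_n^2 = -1$, and for $i\neq j$ one gets $(e_ie_n)(e_je_n) = -e_ie_je_n^2 = e_ie_j = -e_je_i = -(e_je_n)(e_ie_n)$, so $[e_ie_n, e_je_n] = -1$; finally $-1$ is central. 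Hence the assignment extends to a well-defined group homomorphism $\Psi$ by the universal property of presentations.

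Next I would show $\Psi$ is surjective. Every element of $E(n)$ is $\pm e_{i_1}\cdots e_{i_{2k}}$ with $i_1<\cdots<i_{2k}$; grouping the factors in consecutive pairs and inserting $e_n^2 = -1$ appropriately (or more simply, noting that $e_ae_b = (e_ae_n)(e_ne_b) = -(e_ae_n)(e_be_n)$ for $a,b<n$, and that $e_ae_n = \Psi(e_a)$ already lies in the image while $e_ne_b = -e_be_n = -\Psi(e_b)$) exhibits any such monomial, up to sign, as a product of the $\Psi(e_i)$. Since $-1 = \Psi(-1)$ is also in the image, $\Psi$ is onto.

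For injectivity I would compare orders. On the target side, $E(n)$ has order $2^{n-1}$: the even-length monomials $e_{i_1}\cdots e_{i_{2k}}$ (with indices increasing) form a set of $2^{n-1}$ distinct elements of $Cl(0,n)$, and including the sign $\pm$ would double-count because $Cl(0,n)$ has dimension $2^n$ and these signed even monomials together with the signed odd monomials span it — more carefully, $|E(n)| = 2^{n-1}$ can be read off from the fact that $E(n)$ is the even part of the group $\{\pm e_{i_1}\cdots e_{i_k}\}$ of order $2^{n+1}/2 = 2^n$... here I should be careful; the clean statement is that the signed monomials $\pm e_{i_1}\cdots e_{i_k}$ number $2^{n+1}$ but $\{\pm 1\}$ is identified, giving a group of order $2^{n+1}$ whose even-length subgroup $E(n)$ has index $2$, so $|E(n)| = 2^n$. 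Meanwhile $G_{0,n-1}$ is generated by $n$ elements $-1, e_1,\ldots,e_{n-1}$ with $[e_i,e_j]=-1$ central and $e_i^2=-1$, so it is a quotient of a group of order $2^{n}$ (the quotient $\langle e_1,\ldots,e_{n-1}\rangle/\langle -1\rangle \cong \Z_2^{n-1}$ together with the central $\langle -1\rangle\cong\Z_2$ forces $|G_{0,n-1}| \le 2^n$). A surjection between finite groups of the same order is an isomorphism.

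The main obstacle is pinning down the order of $E(n)$ cleanly — the bookkeeping between "signed monomials", the identification $-1 \sim (-1)$, and the even-length restriction is exactly the kind of off-by-a-factor-of-two computation that is easy to state carelessly; the safest route is to note that $E(n)$ is, by definition, a subgroup of the unit group of $Cl(0,n)$ generated by $\{e_ie_j\}$, that $\langle e_ie_j : i,j\rangle = \langle e_1e_n, \ldots, e_{n-1}e_n, -1\rangle$, and that these $n$ generators satisfy exactly the $G_{0,n-1}$ relations, so $|E(n)|$ divides $2^n$; combined with the surjection $\Psi$ (which forces $|E(n)| \ge |G_{0,n-1}|$ once we also bound $|G_{0,n-1}| \le 2^n$ from the presentation) both orders equal $2^n$ and $\Psi$ is an isomorphism.
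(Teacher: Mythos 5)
Your proof is essentially correct, and in fact your map is the same as the paper's: the paper defines $\psi(e_I)=e_I$ for $|I|$ even and $e_{I\cup\{n\}}$ for $|I|$ odd, which on generators is exactly your $e_i\mapsto e_ie_n$, $-1\mapsto -1$. Where you differ is in the verification. The paper works directly with the normal form $G_{0,n-1}=\{\pm e_{i_1}\cdots e_{i_\ell}\}$ and checks multiplicativity of $\psi$ by a four-case computation on monomials, then dismisses injectivity as easy since both groups have the same order; this is quick but tacitly assumes the presented group really has that normal form (order $2^n$, no further collapse). You instead invoke the universal property of the presentation (checking the relations on the images $e_ie_n$, which you do correctly), prove surjectivity by splitting even monomials into pairs, and close the argument with the order bound $|G_{0,n-1}|\le 2^n$ coming from the central extension $1\to\langle -1\rangle\to G_{0,n-1}\to\Z_2^{n-1}\to 1$ together with $|E(n)|=2^n$ (distinctness of the $2\cdot 2^{n-1}$ signed even monomials, which are basis elements of $Cl(0,n)$ up to sign). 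This buys you a cleaner treatment of the presented group: the fact that $|G_{0,n-1}|=2^n$ is a consequence of your argument rather than an unproved normal-form claim. One small slip: in your final parenthetical you say the surjection forces $|E(n)|\ge |G_{0,n-1}|$; a surjection $G_{0,n-1}\twoheadrightarrow E(n)$ gives the opposite inequality $|G_{0,n-1}|\ge |E(n)|$. The argument still closes because you have $|E(n)|=2^n$ independently and $|G_{0,n-1}|\le 2^n$ from the presentation, so the surjection is between groups of equal order $2^n$ and is therefore an isomorphism; just state the inequality in the right direction (and delete the earlier waffling about $2^{n-1}$, which you already correct yourself on).
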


\begin{proof}
We have: 
$$G_{0,n-1}=\{\pm e_{i_1}\ldots e_{i_{\ell}}\mid 1\leq i_j\leq n-1, e_i^2=-1, e_ie_j=-e_je_i\}.$$
Given an ordered set $I=(i_1,\ldots, i_m)$ with indices $i_j$ in $\{1, \ldots, n-1\}$, let $e_I=e_{i_1}\ldots e_{i_m}$.
Notice that if $I=(i_1,\ldots, i_m)$ and $J=(j_1,\ldots, j_p)$, then $e_Ie_J=e_{I\cup J}$,
where $I\cup J=(i_1,\ldots, i_m,j_1,\ldots, j_p)$. Now, define the map $\psi:G_{0,n-1}\to E(n)$ by
\begin{equation*}
\psi(e_I)=
\begin{cases}
e_I & |I|~{\text{even}}\\
e_{I\cup\{n\}} & |I|~{\text{odd}}
\end{cases}
\end{equation*}
First, we claim that $\psi(e_Ie_J)=\psi(e_I)\psi(e_J)$ for multi-indices $I$ and $J$.

\smallskip 

{\textbf{Case 1.}} $|I|$ and $|J|$ are both even. In this case, we have:
$$\psi(e_Ie_J)=\psi(e_{I\cup J})=e_{I\cup J}=e_Ie_J=\psi(e_I)\psi(e_J).$$

\smallskip 

{\textbf{Case 2.}} $|I|$ and $|J|$ are both odd. In this case, we have:
$$\psi(e_Ie_J)=\psi(e_{I\cup J})=e_{I\cup J}=e_Ie_J=e_Ie_J(-e_ne_n)=e_{I\cup\{n\}}e_{J\cup\{n\}}=\psi(e_I)\psi(e_J).$$

\smallskip 

{\textbf{Case 3.}} If $|I|$ is even and $|J|$ is odd, then
$$\psi(e_Ie_J)=\psi(e_{I\cup J})=e_{I\cup J\cup\{n\}}=e_Ie_{J\cup\{n\}}=\psi(e_I)\psi(e_J).$$

\smallskip 

{\textbf{Case 4.}} If $|I|$ is odd and $|J|$ is even, then
$$\psi(e_Ie_J)=\psi(e_{I\cup J})=e_{I\cup J\cup\{n\}}=e_{I\cup\{n\}}e_J=\psi(e_I)\psi(e_J).$$

\smallskip

Therefore, $\psi$ is a homomorphism. It is easy to see that $\psi$ is injective, and hence an isomorphism
since the groups $G_{0,n-1}$ and $E(n)$ have the same order.
\end{proof}

The groups $G_{0,n-1}$ have been classified by Salingaros \cite{Sal81,Sal82,Sal84} (cf. \cite{AVW18}).
We use this classification to write the group $E(n)\cong G_{0,n-1}$ as a central product.
This gives rise to the following list for fundamental groups of the principal orbits of the $G$-action on $M_{n\times n}(\R)$:\\
\begin{equation*}
E(n)\times\Z_2\cong
\begin{cases}
((Q_8)^{*\frac{n-4}{2}}*D_8)\times \Z_2^2 & n\equiv 0~({\text{mod}}~8)\vspace{0.1cm}\\
(Q_8)^{*\frac{n-1}{2}}\times\Z_2 & n\equiv 1, 3~({\text{mod}}~8)\vspace{0.1cm}\\
((Q_8)^{*\frac{n-2}{2}}*\Z_4)\times\Z_2 & n\equiv 2, 6~({\text{mod}}~8)\vspace{0.1cm}\\
(Q_8)^{*\frac{n-2}{2}}\times\Z_2^2 & n\equiv 4~({\text{mod}}~8)\vspace{0.1cm}\\
((Q_8)^{*\frac{n-3}{2}}*D_8)\times\Z_2 & n\equiv 5, 7~({\text{mod}}~8)
\end{cases}
\end{equation*}

We do not know, however, whether \emph{all} groups in Theorem \ref{main-thm:non-abelian part} do in fact arise 
as fundamental groups of principal leaves in a simply connected manifold.
\smallskip

\section{Virtually nilpotent fundamental group}\label{S:nilpotent fundamental group}

In this section, we consider singular Riemannian foliations $(M,\fol)$, where the fundamental group of $M$ is virtually nilpotent.
As the following example shows, the fundamental group of a principal leaf is not necessarily nilpotent in this case.

\begin{example}
Let $\hat{M}={\mathbb{C}}^2\times{\mathbb{S}}^1$ and consider the homogeneous foliation $\hat{\fol}$ 
on $\hat{M}$ induced by the linear action of $T^3=T^2\times S^1$. Let $M={\hat{M}}/{\Z_2}$, where 
the non-trivial element $g$ of $\Z_2$ acts by $g\cdot(z_1,z_2,t)=({\bar{z}}_1,{\bar{z}}_2,t+\frac{1}{2})$.
Note that $M$ inherits a singular Riemannian foliation $\fol={\hat{\fol}}/{\Z_2}$.
\par
The manifold $M$ is orientable, and is homotopy equivalent to ${\mathbb{S}}^1$. In particular, $M$ is nilpotent.
However, the principal leaf of $\fol$ is $T^3/{\Z_2}$ which has fundamental group
$$G=\Z^2\rtimes\Z=\langle a, b, c: cac^{-1}=a^{-1}, cbc^{-1}=b^{-1}, ab=ba\rangle.$$
Since $G_{\ell}=\langle a^{2^{\ell}}, b^{2^{\ell}}\rangle$ for any $\ell$, $G$ is not nilpotent.
\end{example}

Nevertheless, in what follows, we prove that the fundamental groups of the leaves contain a nilpotent subgroup of finite index. 

\begin{notation}
Throughout the rest of this section, $L_0$ denotes a principal leaf of $\fol$. Furthermore, we fix $p_0\in L_0$, 
and $K=\langle k_1,\ldots, k_m\rangle$ denotes the normal subgroup of $\pi_1(L_0,p_0)$ discussed at the beginning 
of Section \ref{S:fundamental group}. Recall that there is a homotopy fibration
$$L_0\overset{\iota_0}{\rightarrow}M_0\overset{\hat{\theta}}{\rightarrow}B.$$
which induces a long exact sequence
\[
0\to H\to \pi_1(L_0,p_0)\stackrel{(\iota_0)_*}{\to} \pi_1(M_0,p_0)\stackrel{\hat{\theta}_*}{\to} \pi_1(B,b)\to 1,
\]
where $H=\partial(\pi_2(B))$, as well as an action of $\pi_1(B,b)$ on $L_0$. Denote by $\hat{K}$ the group generated by $H$ 
and $c\cdot K$, for $c\in \pi_1(B,b)$. Notice that for every $\gamma\in \pi_1(M_0,p_0)$ with $c=\hat{\theta}_*(\gamma)$, 
and every $g\in\pi_1(L_0,p_0)$, $(\iota_0)_*(c\cdot g)=\gamma (\iota_0)_*(g)\gamma^{-1}$.
\end{notation}
%
%\begin{lemma}\label{lem:Z(K)}
%$[K,\pi_1(L_0)]\subseteq Z(K)$.
%\end{lemma}
%
%\begin{proof}
%Since $K$ is normal in $\pi_1(L_0)$, we have $[K,\pi_1(L_0)]\subseteq K$. Let $\gamma\in\pi_1(L_0)$ and $k\in K$.
%We use induction on the length of $k$ to prove that $[k,\gamma]\in Z(K)$. First, suppose that $k_i$ is a generator of $K$
%and consider the circle bundle ${\bf p}_i:L_0\to L_{\pi_i(p_i)}$ discussed in the proof of Theorem \ref{thm:nilpotent}. 
%Note that $\gamma k_i\gamma^{-1}=k_i^{\pm 1}$, depending on whether ${\bf p}_i$ is orientable or non-orientable
%along the curve representing $\gamma$. In particular, $[k_i,\gamma]$ is either $1$ or $k_i^2$ which is central 
%in $K$ by proof of Theorem A in \cite{GGR15}. Suppose now that $k=k_ik'$ for some generator $k_i$
%and some $k'\in K$, which satisfies $[k',\gamma]\in Z(K)$ by induction hypothesis. We have
%\begin{equation*}
%[k,\gamma]=k_ik'\gamma (k')^{-1}k_i^{-1}\gamma^{-1}=k_i[k',\gamma]\gamma k_i^{-1}\gamma^{-1}=[k',\gamma][k_i,\gamma]\in Z(K).\qedhere
%\end{equation*}
%\end{proof}
%
%The following proposition gives a sufficient and necessary condition for the fundamental group of a principal leaf to be nilpotent:
%
\begin{lemma}\label{lemma:central}
Let $(M,\fol)$ be a closed singular Riemannian foliation on a compact Riemannian manifold $M$.
If $\pi_1(M)$ is $n$-step nilpotent, then $(\pi_1(L_0,p_0))_{n+1}\subseteq \hat{K}$, where $(\pi_1(L_0,p_0))_{n+1}$ 
denotes the $(n+1)$-th group in the lower central series of $\pi_1(L_0,p_0)$.
\end{lemma}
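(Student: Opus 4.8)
The plan is to use the homotopy fibration $L_0\overset{\iota_0}{\to}M_0\overset{\hat\theta}{\to}B$ to realize $\pi_1(L_0,p_0)/\hat K$ as a subgroup of $\pi_1(M)$. Since $\pi_1(M)$ is $n$-step nilpotent and every subgroup of an $n$-step nilpotent group is again $n$-step nilpotent, this will force the $(n+1)$st term $(\pi_1(L_0,p_0))_{n+1}$ of the lower central series into $\hat K$.

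Step one is to push $\hat K$ forward along $\iota_0$. Write $\bar k_i:=(\iota_0)_*(k_i)\in\pi_1(M_0,p_0)$. Using the identity $(\iota_0)_*(c\cdot g)=\gamma\,(\iota_0)_*(g)\,\gamma^{-1}$ (for any lift $\gamma\in\pi_1(M_0,p_0)$ of $c$) together with the normality of $K$ in $\pi_1(L_0,p_0)$ and the fact that $\im(\iota_0)_*=\ker\hat\theta_*$, a short computation shows that $(\iota_0)_*(\hat K)$ is stable under conjugation by all of $\pi_1(M_0,p_0)$, hence equals the normal closure $\langle\langle \bar k_1,\dots,\bar k_m\rangle\rangle$. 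Since $H=\ker(\iota_0)_*$ is contained in $\hat K$, it follows that $\hat K=(\iota_0)_*^{-1}\!\big(\langle\langle \bar k_i\rangle\rangle\big)$; in particular $\hat K\trianglelefteq\pi_1(L_0,p_0)$ and $(\iota_0)_*$ induces an injection
\[
\pi_1(L_0,p_0)/\hat K\;\embedded\;\pi_1(M_0,p_0)/\langle\langle \bar k_i\rangle\rangle .
\]

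Step two is to identify the target of this injection with $\pi_1(M)$. Since we have reduced to singular strata of codimension two, $M_0=M\setminus(\Sigma_1\cup\dots\cup\Sigma_m)$, and because deleting a connected codimension-two submanifold kills exactly the normal closure of its meridian, the kernel of the surjection $\pi_1(M_0,p_0)\twoheadrightarrow\pi_1(M,p_0)$ is normally generated by the meridians $\mu_1,\dots,\mu_m$ of the $\Sigma_i$. The crucial geometric observation is that $\mu_i$ is conjugate in $\pi_1(M_0,p_0)$ to $\bar k_i$: by the foliated Slice Theorem, in a tube around $L'_i$ the leaf $L_i$ lies in a sphere bundle of the normal bundle $\nu\Sigma_i$, and the fiber $c_i$ of the circle bundle $\pi_i\colon L_i\to L'_i$ over a point $q$ is exactly the intersection of $L_i$ with the normal $2$-plane to $\Sigma_i$ at the foot point of $q$ — that is, the boundary of a small normal $2$-disk, which is a meridian of $\Sigma_i$. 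Transporting this via the diffeomorphism $h_i\colon L_i\to L_0$ shows $\bar k_i$ and $\mu_i$ agree up to conjugacy, so $\langle\langle \bar k_i\rangle\rangle=\ker\big(\pi_1(M_0,p_0)\to\pi_1(M,p_0)\big)$ and $\pi_1(M_0,p_0)/\langle\langle \bar k_i\rangle\rangle\cong\pi_1(M)$. Thus the injection above exhibits $\pi_1(L_0,p_0)/\hat K$ as a subgroup of $\pi_1(M)$; since the latter has trivial $(n+1)$st lower central term, so does $\pi_1(L_0,p_0)/\hat K$, which says exactly that $(\pi_1(L_0,p_0))_{n+1}\subseteq\hat K$.

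I expect the main obstacle to be the geometric identification of $\mu_i$ with $\bar k_i$: one has to unwind the Slice Theorem at a codimension-two stratum — where the infinitesimal foliation of the normal $2$-plane consists of concentric circles and the slice group acts through a cyclic rotation group, which is forced by $\pi_i$ being a genuine circle bundle — to be certain that the fiber of $L_i\to L'_i$ is the meridian of $\Sigma_i$ itself, rather than a proper multiple of it or an unrelated loop. The group-theoretic manipulations, the monodromy conjugation formula, and the van Kampen computation for removing codimension-two strata are all routine.
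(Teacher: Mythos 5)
Your proof is correct, and at its core it runs on the same two inputs as the paper's: (a) the identification of the circle fibers $c_i$ (hence the $\bar k_i=(\iota_0)_*(k_i)$, up to conjugacy and inversion) with meridians of the codimension-two strata, and (b) the transversality/disk argument showing that a loop in $M_0$ which dies in $M$ is a product of conjugates of such meridians. The difference is organizational: the paper argues element-wise, taking $\alpha\in(\pi_1(L_0,p_0))_{n+1}$, using $n$-step nilpotency of $\pi_1(M)$ to bound $\iota(\alpha)$ by a disk, making it transverse to the strata and reading off $[\alpha]=h\,(c_1\cdot k_{i_1})\cdots(c_r\cdot k_{i_r})\in\hat K$; you instead prove the cleaner structural statement $\hat K=(\iota_0)_*^{-1}\big(\ker(\pi_1(M_0,p_0)\to\pi_1(M))\big)$, so that $\pi_1(L_0,p_0)/\hat K$ embeds in $\pi_1(M)$, and then conclude by heredity of nilpotency class under subgroups. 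Your step identifying $(\iota_0)_*(\hat K)$ with the normal closure of the $\bar k_i$ is sound (it uses exactly the conjugation formula $(\iota_0)_*(c\cdot g)=\gamma(\iota_0)_*(g)\gamma^{-1}$ together with normality of $K$ and $H=\ker(\iota_0)_*\subseteq\hat K$), and your appeal to the standard fact that removing closed codimension-two submanifolds kills precisely the normal closure of their meridians is the packaged form of the paper's explicit disk argument. The point you flag as the main obstacle -- that the fiber of $\pi_i\colon L_i\to L'_i$ is a genuine meridian of $\Sigma_i$, and that this survives transport by $h_i$ -- is precisely what the paper also relies on (its unproved assertion (1), inherited from the setup of \cite{GGR15}), so you are on equal footing there; your slice-theorem sketch (infinitesimal foliation transverse to a codimension-two stratum is by concentric circles, and the fiber of the foot-point projection over $q$ is a single such circle) is the right justification. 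Net effect: your version yields the slightly stronger conclusion that $\pi_1(L_0,p_0)/\hat K$ is a subgroup of $\pi_1(M)$, from which the lemma is immediate, at the cost of invoking the codimension-two complement theorem in general rather than only for the specific disk the paper needs.
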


\begin{proof}
Since removing strata of codimension $> 2$ does not change the fundamental group of $M$, we can assume that $M$ 
only contains singular strata of codimension $\leq 2$. In particular, we use the notation and results in Section 
\ref{SS:known-results}.

Letting $\iota:L_0\to M$ denote the inclusion, one then has
\[
\iota_*((\pi_1(L_0,p_0))_{n+1})\subseteq (\pi_1(M,p_0))_{n+1}=1.
\]
Therefore, given any curve $\alpha$ representing an element of $(\pi_1(L_0,p_0))_{n+1}$, there exists a disk 
$\bar{\iota}:\mathbb{D}^2\to M$ extending $\iota(\alpha)$. By transversality, this can be deformed to only intersect, 
transversely, the singular strata $\Sigma_1,\ldots,\Sigma_m$ of codimension 2, and the intersection consists of finitely many points $\{q_1,\ldots q_r\}$ with $q_j\in \Sigma_{i_j}$. For each $j=1,\ldots, r$, let $q'_j$ be a point in $\bar\iota(\mathbb{D}^2)$ 
close to $q_j$, let $u_j$ be a curve in $\bar\iota(\mathbb{D}^2)$ connecting $p_0$ to $q'_j$, and let $\psi_j$ a small loop 
in $\bar\iota(\mathbb{D}^2)$ based at $q'_j$, turning once around $q_j$. Finally, let $\gamma_j=u_j\star \psi_j\star u_j^{-1}$. Then:
\begin{enumerate}
\item For each $i=1,\ldots, r$, $[\gamma_j]\in \pi_1(M_0,p_0)$ is conjugate to $(\iota_0)_*(k_{i_j})$ 
with $k_{i_j}\in K\subseteq \pi_1(L_0,p_0)$. By the discussion before the proposition, it follows that $[\gamma_j]=(\iota_0)_*(c_j\cdot k_{i_j})$ for some $c_j\in \pi_1(B,b)$.
\item $(\iota_0)_*[\alpha]=[\gamma_1]\star\cdots\star[\gamma_r]=(\iota_0)_*((c_1\cdot k_{i_1})\star\cdots\star (c_r\cdot k_{i_r}))$ in $\pi_1(M_0,p_0)$.
\end{enumerate}

Since $H=\ker((\iota_0)_*)$, it follows that $[\alpha]=h((c_1\cdot k_{i_1})\star\cdots\star (c_r\cdot k_{i_r}))$ 
for some $h\in H$. In particular, $[\alpha]\in \hat{K}$, and therefore $(\pi_1(L_0,p_0))_{n+1}\subseteq \hat{K}$.
\end{proof}

We are finally ready to prove that if $(M, \fol)$ is a closed singular Riemannian foliation with $\pi_1(M)$ virtually nilpotent, then the fundamental group of every leaf is virtually nilpotent as well.

\begin{proof}[Proof of Theorem \ref{main-thm:virtually nilpotent}]
Notice that if $\pi:\hat{M}\to M$ is a finite cover, and $(\hat{M},\hat{\fol})$ is the lifted singular Riemannian foliation, one has that a leaf $\hat{L}\in \hat{\fol}$ has virtually nilpotent fundamental group if and only the corresponding leaf $\pi(\hat{L})\in \fol$ does. Therefore, up to replacing $M$ with a finite cover $\hat{M}$, we can assume that $\pi_1(M)$ is nilpotent.

Let $L_0$ be a principal leaf, and consider the Hurewicz homomorphism $h:\pi_1(L_0,p_0)\to H_1(L_0;\Z)$ and let $G=h^{-1}(2\cdot H_1(L_0;\Z))$. 
Clearly, $G$ has finite index in $\pi_1(L_0,p_0)$, Since $\pi_1(L_0,p_0)/G\cong H_1(L_0;\Z)/2\cdot H_1(L_0;\Z)$ 
is finite. We claim that if $\pi_1(M)$ is $n$-step nilpotent, then $G$ is $(n+1)$-step nilpotent.

By Lemma \ref{lemma:central}, $G_{n+1}\subseteq G\cap \hat{K}$. The proof is complete once we prove that $G$ 
commutes with $\hat{K}$. Notice that $\hat{K}$ is generated by $H$, and elements of the form $c\cdot k_i$ for $c\in \pi_1(B,b)$ 
and $k_i$ one of the generators of $K$. Recall that $H$ is central in $\pi_1(L_0,p_0)$ (in particular, $G$ commutes with $H$),
and for each $g\in \pi_1(L_0,p_0)$, $gk_ig^{-1}=k_i^{\pm 1}$. Since $\pi_1(B,b)$ acts on $\pi_1(L_0,p_0)$ 
by group automorphisms, it also follows that for every $g\in \pi_1(L_0,p_0)$, $g(c\cdot k_i)g^{-1}=(c\cdot k_i)^{\pm 1}$.

Notice that if $g(c\cdot k_i)g^{-1}=(c\cdot k_i)^\epsilon$ (for $\epsilon=\pm1$), then $g^{-1}(c\cdot k_i)g=(c\cdot k_i)^{\epsilon}$ as well. In particular, for every $g_1,g_2\in \pi_1(L_0,p_0)$, and every $(c\cdot k_i)\in \hat{K}$, one has:
\[
[g_1,g_2]\cdot (c\cdot k_i)[g_1,g_2]^{-1}=(c\cdot k_i).
\]
The main observation is that, by definition, any element $g\in G$ can be written as $g=g_3^2[g_1,g_2]\cdots [g_{2k-1},g_{2k}]$ 
for some $g_1,\ldots g_{2k}\in \pi_1(L_0,p_0)$ and therefore, for any generator $(c\cdot k_i)$ of $\hat{K}$, one has:
\begin{align*}
g(c\cdot k_i)g^{-1}=&g_3^2[g_1,g_2]\cdots [g_{2k-1},g_{2k}](c\cdot k_i)[g_{2k-1},g_{2k}]^{-1}\cdots [g_1,g_2]^{-1}g_3^{-2}\\
=&g_3^2(c\cdot k_i)g_3^{-2}=g_3(c\cdot k_i)^\epsilon g_3^{-1}\\
=&(c\cdot k_i)^{\epsilon^2}=(c\cdot k_i).
\end{align*}
Therefore, $G$ commutes with $\hat{K}$ and hence $G_{n+2}=[G,G_{n+1}]\subseteq [G,\hat{K}]=\{1\}$.

This proves that the principal leaves of $\fol$ have virtually nilpotent fundamental group. The corresponding statement for the non-principal leaves then follows from Lemma \ref{L:other-leaves}.
\end{proof}
%
%\todo{Added this part, it is the new version of Theorem C}
%Theorem \ref{main-thm:virtually nilpotent} follows as a straightforward corollary of  Proposition \ref{P:almost-thmC} above.
%\begin{proof}[Proof of Theorem \ref{main-thm:virtually nilpotent}]
%If $\hat{M}$ is a finite cover of $M$ with nilpotent fundamental group, and $(\hat{M},\hat{\fol})$ is the induced foliation, by Proposition \ref{P:almost-thmC} the principal leaves $\hat{L}$ of $\hat{\fol}$ have virtually nilpotent fundamental group. On the other hand, the covering $\hat{L}\to L$, with $L$ a principal leaf of $\fol$, induces an inclusion $\pi_1(\hat{L})\subseteq \pi_1(L)$ of finite index, thus the principal leaves $L$ of $\fol$ have virtually nilpotent fundamental group as well. Finally, for any leaf $L'$ of $\fol$, the foliated Slice Theorem \cite{MR19} implies that there is a fibration $L\to L'$ whose fiber $F$ has finitely many connected components. From the long exact sequence in homotopy one then has
%\[
%\pi_1(L)\to \pi_1(L')\to \pi_0(F)
%\]
%from which is follows that $\pi_1(L')$ is a finite extension of a quotient of $\pi_1(L)$, therefore it is virtually nilpotent as well.
%\end{proof}

\appendix

\section{Classification of quadratic forms over $\Z_2$}

The classification of quadratic forms over $\Z_2$ is well known. However, what appears usually in the literature is the classification of \emph{nondegenerate} quadratic forms, which is not what interests us here. Therefore, we provide the details of the classification.

\begin{proposition}\label{prop:quadratic form}
Every non-trivial quadratic form on $\Z_2^{\ell}$ is isometric to one of the following:
$$H_+^{\oplus m_1}\oplus 0^{m_2},\qquad H_-\oplus H_+^{\oplus m_1-1}\oplus 0^{m_2},\qquad H_+^{\oplus m_1}\oplus Q_1\oplus 0^{m_2-1},$$
where $2m_1+m_2=\ell$.
\end{proposition}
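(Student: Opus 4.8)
The plan is to prove the classification by reducing to the well-known classification of nondegenerate quadratic forms over $\Z_2$, which I will freely assume. The key structural fact is that any quadratic form $Q$ on $\Z_2^\ell$ has a \emph{radical} $R = \{v \in V : B_Q(v,w) = 0 \text{ for all } w\}$, the radical of the associated bilinear form $B_Q$. On the complement one gets a nondegenerate alternating bilinear form, so $V$ admits a symplectic-type decomposition; the only subtlety over $\Z_2$ is that $Q$ restricted to $R$ need not vanish, since $Q$ carries more information than $B_Q$.

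First I would split off the radical. Choose a complement $W$ to $R$ in $V$, so $V = W \oplus R$ and $B_Q|_{W\times W}$ is nondegenerate and alternating. Since $B_Q(W,R)=0$, for $w\in W$, $r\in R$ we have $Q(w+r) = Q(w)+Q(r)$, so $Q = Q|_W \oplus Q|_R$ as an orthogonal sum. Now $Q|_W$ is a nondegenerate quadratic form on an even-dimensional space $\Z_2^{2m_1}$; by the standard classification it is isometric to $H_+^{\oplus m_1}$ (the even/split/Arf-invariant-zero type) or $H_-\oplus H_+^{\oplus m_1-1}$ (the Arf-invariant-one type), using that $H_-\oplus H_- \cong H_+\oplus H_+$ over $\Z_2$, which collapses all "non-split" forms of a given rank to a single class. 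For the radical part, $Q|_R : R \to \Z_2$ is a quadratic form with \emph{zero} associated bilinear form, hence a linear map (additive since $B_Q|_R=0$ means $Q(r+r')=Q(r)+Q(r')$) — actually it is $\Z_2$-linear, so it is either identically zero or surjective with kernel a hyperplane. In the first case $Q|_R = 0^{\oplus m_2}$; in the second, choosing a basis adapted to the kernel, $Q|_R \cong Q_1 \oplus 0^{\oplus m_2-1}$ (where recall $Q_1(x)=x^2$ picks out the one coordinate on which the linear form is nonzero).

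Combining, $Q$ is isometric to exactly one of
\[
H_+^{\oplus m_1}\oplus 0^{m_2},\qquad (H_-\oplus H_+^{\oplus m_1-1})\oplus 0^{m_2},\qquad H_+^{\oplus m_1}\oplus Q_1 \oplus 0^{m_2-1},
\]
with $2m_1 + m_2 = \ell$, which is the claimed list. One should note the small overlap bookkeeping: when $Q|_R$ is nonzero the cases where $Q|_W$ is of $H_-$-type can be absorbed — since $Q_1 \oplus Q_1 \cong$ a rank-two form with the same data as part of an $H_\pm$ summand, i.e. $H_- \oplus Q_1 \cong H_+ \oplus Q_1$ over $\Z_2$ (the presence of an "odd" vector in the radical kills the Arf-type distinction), so the third family need only list the $H_+^{\oplus m_1}$ representative. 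I would verify this last isometry by an explicit change of basis on the relevant $3$-dimensional or $4$-dimensional subspace.

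The main obstacle is not conceptual but bookkeeping: getting the precise normal form right over $\Z_2$, where the quadratic form and its polarization genuinely diverge, and in particular correctly identifying when an $H_-$ summand can be traded for an $H_+$ summand (namely as soon as either another $H_-$ or a $Q_1$ is present). This requires care because the statement is asserting a \emph{complete} and \emph{non-redundant} list, so one must check both that every form appears and that the three families are genuinely distinct (distinguished by $\dim V$, $\dim\ker B_Q$, whether $Q$ vanishes on $\ker B_Q$, and the Arf invariant of the nondegenerate part when the radical part is trivial). I would close by recording these invariants to confirm the three families are pairwise non-isometric.
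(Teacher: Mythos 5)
Your argument is correct and follows essentially the same route as the paper's: decompose along the radical of $B_Q$, classify the nondegenerate part (two classes, by Arf type) and the radical part (where $Q$ is a linear functional, giving $0$ or $Q_1\oplus 0^{\oplus m_2-1}$), and then collapse the overlap via the isometry $H_-\oplus Q_1\cong H_+\oplus Q_1$ — precisely the isometries the paper invokes. The only cosmetic difference is your choice of black box: you assume the classification of nondegenerate quadratic forms directly, while the paper starts from the classification of symmetric bilinear forms over $\Z_2$ and lifts it to quadratic forms.
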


\begin{proof}
Let $H:\Z_2^2\times\Z_2^2\to\Z_2$ be the bilinear form given by
\[
H((x,y),(z,w))=xw+yz.
\] 
By the classification of bilinear forms over $\Z_2$ (cf. for example Proposition 1.8, Corollary 1.9 and the discussion below
in \cite{EKM08}), every symmetric bilinear form on a vector space $V$ over $\Z_2$ is isometric to $H^{m_1}\oplus 0^{m_2}$, where $2m_1+m_2=\ell$. By Equation \eqref{eq:quadratic form} in Section \ref{SS:quadratic}, 
it is easy to see that there are two equivalence classes of quadratic forms associated to $H^{m_1}$, 
that is the quadratic forms $Q=H_+^{m_1}$ and $Q=H_-\oplus H_+^{m_1-1}$, where $H_{\pm}:\Z_2^2\to\Z_2$ 
are given by 
\begin{align*}
& H_+(x,y)=xy,\\
& H_-(x,y)=x^2+y^2+xy.
\end{align*}
Similarly, corresponding to $0^{m_2}$, there are the quadratic forms $Q_0=0$ 
and $Q_{\alpha}(x_1,\ldots,x_{m_2})=\sum_{i=1}^{\alpha}x_i^2$ for any $1\leq \alpha\leq m_2$. 
However, $Q_{\alpha}$ is isometric to $Q_1\oplus 0^{m_2-1}$. Moreover, one has well known isometries
\begin{align*}
H_+^{\oplus m_1}\oplus Q_1\simeq H_-\oplus H_+^{\oplus m_1-1}\oplus Q_1,\qquad H_+^{\oplus 2}\simeq H_-^{\oplus 2},
\end{align*}
which conclude the proof.
\end{proof}

\bibliographystyle{plain}		% alpha   % plain

\end{document}